\newtheorem{lma}{Lemma}[section]
\newaliascnt{thmCt}{lma}
\newtheorem{thm}[thmCt]{Theorem}
\newaliascnt{corCt}{lma}
\newtheorem{cor}[corCt]{Corollary}
\newaliascnt{prpCt}{lma}
\newtheorem{prp}[prpCt]{Proposition}
\theoremstyle{definition}
\newaliascnt{pgrCt}{lma}
\newaliascnt{dfnCt}{lma}
\newtheorem{dfn}[dfnCt]{Definition}
\newaliascnt{rmkCt}{lma}
\newtheorem{rmk}[rmkCt]{Remark}
\newaliascnt{rmksCt}{lma}
\newtheorem{rmks}[rmksCt]{Remarks}
\newaliascnt{qstCt}{lma}
\newtheorem{qst}[qstCt]{Question}
\newaliascnt{ntnCt}{lma}
\newcounter{theoremintro}
\newcommand{\NN}{\mathbb{N}}
\newcommand{\RR}{\mathbb{R}}
\newcommand{\ZZ}{\mathbb{Z}}
\newcommand{\CC}{{\mathbb C}}
\newcommand{\andSep}{\quad\text{ and }\quad}
\newcommand{\SubSep}{\mathrm{Sub}_{\mathrm{sep}}}
\newcommand{\sa}{\mathrm{sa}}
\newcommand{\grPre}{\gr_0}
\newcommand{\ca}{$C^*$-algebra}
\DeclareMathOperator{\diag}{diag}
\DeclareMathOperator{\rr}{rr}
\DeclareMathOperator{\gr}{gr}
\title{Generators in $\mathcal{Z}$-stable $C^*$-algebras of real rank zero}
\author{Hannes Thiel}
\address{Hannes Thiel
Mathematisches Institut, Universit\"at M\"unster, Einsteinstr.~62, 48149 M\"unster, Germany.}
\email{hannes.thiel@posteo.de}
\urladdr{http://hannesthiel.org}
\thanks{The author was partially supported by the Deutsche Forschungsgemeinschaft (DFG, German Research Foundation) under Germany's Excellence Strategy EXC 2044-390685587 (Mathematics M\"{u}nster: Dynamics-Geometry-Structure).
}
\subjclass[2010]%
{Primary
46L05; 
Secondary
46L85. 
}
\keywords{$C^*$-algebras, generator rank, real rank, generator problem, single generation}
\date{\today}
\begin{document}

\begin{abstract}
We show that every separable \ca{} of real rank zero that tensorially absorbs the Jiang-Su algebra contains a dense set of generators.

It follows that in every classifiable, simple, nuclear \ca{}, a generic element is a generator.
\end{abstract}

\maketitle

\section{Introduction}

The generator problem for \ca{s} asks to determine the minimal number of generators for a given \ca.
One difficulty when studying this problem is that the minimal number of generators is a rather ill-behaved invariant;
in particular, it may increase when passing to ideals or inductive limits.
We showed in \cite{Thi12arX:GenRnk} that a much better behaved invariant is obtained by considering the minimal number $n$ such that generating $n$-tuples are dense.

The most interesting aspect of the generator problem is to determine which \ca{s} are generated by a single operator, and it is a mayor open question if every separable, simple \ca{} is singly generated.
A natural variant of this problem is to describe the class of \ca{s} that contain a dense set of generators.
In previous work, we showed that this class includes all separable AF-algebras (\cite[Theorem~7.3]{Thi12arX:GenRnk}) and all separable, approximately subhomogeneous (ASH) algebras that are $\mathcal{Z}$-stable, that is, that tensorially absorb the Jiang-Su algebra $\mathcal{Z}$ (\cite[Theorem~5.10]{Thi20arX:grSubhom}).

In this paper, we show that every separable, $\mathcal{Z}$-stable \ca{} of real rank zero contains a dense set of generators;
see \autoref{prp:mainThm}.
This includes all separable, nuclear, purely infinite \ca{s} of real rank zero (\autoref{prp:nuclPI}) and in particular every Kirchberg algebra (\autoref{prp:KirchbergAlg}).
Together with the previous result about  $\mathcal{Z}$-stable ASH-algebras, we deduce that every classifiable, simple, nuclear \ca{} contains a dense set of generators;
see \autoref{prp:classifiable-gr1}.
\\

The main tool to prove these results is the generator rank, which was introduced in \cite{Thi12arX:GenRnk}.
A unital, separable \ca{} $A$ has generator rank at most $n$, denoted $\gr(A)\leq n$, if the set of self-adjoint $(n+1)$-tuples that generate $A$ as a \ca{} is dense;
see \autoref{dfn:gr} for the general definition in the nonunital and nonseparable case.
By \cite[Proposition~5.6]{Thi12arX:GenRnk}, $A$ has generator rank zero if and only if $A$ is commutative with totally disconnected spectrum.
Since two self-adjoint elements $a$ and $b$ generate the same sub-\ca{} as the element $a+ib$, it follows that $A$ has generator rank at most one if and only if $A$ contains a dense set of generators.

Our main result (\autoref{prp:mainThm}) shows that separable, $\mathcal{Z}$-stable \ca{s} of real rank zero have generator rank one.
To prove this, we heavily rely on the permanence properties of the generator rank that were established in \cite{Thi12arX:GenRnk}.
In particular, the generator rank does not increase when passing to ideals, quotients, or inductive limits, and we can estimate the generator rank of extensions;
see \autoref{prp:permanence}.

The general strategy is as follows:
Given a unital, separable, $\mathcal{Z}$-stable \ca{} $A$ of real rank zero, 
we use that $\mathcal{Z}$ is the inductive limit of generalized dimension-drop algebras $Z_{2^\infty,3^\infty}$ (see the proof of \autoref{prp:gr8} for the definition) to reduce the problem to computing the generator rank of $A\otimes Z_{2^\infty,3^\infty}$.

We obtain the estimate $\gr(A\otimes Z_{2^\infty,3^\infty})\leq 1$ by combining two results:
First, we show that $A\otimes Z_{2^\infty,3^\infty}$ has finite generator rank (we verify that it is at most $8$);
see \autoref{prp:gr8}.
Then, using a delicate construction of generators in $A\otimes Z_{2^\infty,3^\infty}$, we show that $\gr(A\otimes Z_{2^\infty,3^\infty})\leq n+1$ implies that $\gr(A\otimes Z_{2^\infty,3^\infty})\leq n$, for every $n\geq 1$;
see \autoref{prp:construction}.

To verify that $A\otimes Z_{2^\infty,3^\infty}$ has finite generator rank, we use the short exact sequence
\[
0 \to A\otimes C_0((0,1),M_{6^\infty}) \to A\otimes Z_{2^\infty,3^\infty} \to A\otimes(M_{2^\infty}\oplus M_{3^\infty}) \to 0,
\]
and it suffices to show that the ideal and the quotient have finite generator rank.
To estimate the generator rank of $A\otimes C_0((0,1),M_{6^\infty})$, we use that it is an ideal in $A\otimes C_0([0,1],M_{6^\infty})$.
In \autoref{sec:tensI}, we prove an upper bound for the generator rank of such algebras.

The algebra $A\otimes Z_{2^\infty,3^\infty}$ has a natural structure as a $C([0,1])$-algebra, with each fiber over $(0,1)$ isomorphic to $A\otimes M_{6^\infty}$, and with the fibers at $0$ and $1$ isomorphic to $A\otimes M_{3^\infty}$ and $A\otimes M_{2^\infty}$, respectively.
We use a Stone-Weierstra{\ss}-type result that characterizes when a set of self-adjoint elements generates $A\otimes Z_{2^\infty,3^\infty}$:
the elements have to generate each fiber, and they have to suitably separate the points in $[0,1]$.
The assumption of real rank zero is crucial for both:
it is used in the proof of \autoref{prp:UHFabs} to show that the tensor product of $A$ with a UHF-algebra has generator rank one;
and it is used in the proofs of \autoref{prp:approxSepInterval} and \autoref{prp:construction} to construct self-adjoint elements that separate the points in $[0,1]$.

\subsection*{Acknowledgments}

The author thanks Eusebio Gardella for valuable comments and feedback.
I also want to thank the anonymous referee for his useful suggestions.

\subsection*{Notation}

Given a \ca{} $A$, we use $A_\sa$ to denote the set of self-adjoint elements in $A$. 
We write $M_d$ for the \ca{} of $d$-by-$d$ complex matrices.
Given a subset $F\subseteq A$ and $a\in A$, we write $a\in_\varepsilon F$ if there exists $b\in F$ such that $\|a-b\|<\varepsilon$.
We set $\NN:=\{0,1,2,\ldots\}$, the natural numbers including $0$.
The spectrum of an operator $a$ is denoted by $\sigma(a)$.

\section{The generator rank}
\label{sec:grPrelim}

In this section, we recall the definition and basic properties of the generator rank $\gr$ and its precursor $\grPre$ from \cite{Thi12arX:GenRnk}.

\begin{dfn}[{\cite[Definitions~2.1, 3.1]{Thi12arX:GenRnk}}]
\label{dfn:gr}
Let $A$ be a \ca{}.
Then $\grPre(A)$ is the defined as the smallest integer $n\geq 0$ such that for every $a_0,\ldots,a_n\in A_\sa$, $\varepsilon>0$ and $c\in A$, there exist $b_0,\ldots,b_n\in A_\sa$ such that
\[
\|b_j-a_j\|<\varepsilon \ \text{ for } j=0,\ldots,n, \andSep
c\in_\varepsilon C^*(b_0,\ldots,b_n).
\]
If no such $n$ exists, then $\grPre(A)=\infty$.
The \emph{generator rank} of $A$ is defined as $\gr(A):=\grPre(\widetilde{A})$, where $\widetilde{A}$ denotes the minimal unitization of $A$.
\end{dfn}

For the definition and the basic properties of the real rank of \ca{s}, we refer to \cite[Section~V.3.2, p.452ff]{Bla06OpAlgs}.

\begin{rmk}
Let $A$ be a \ca.
If $A$ is unital, then $\gr(A)=\grPre(A)$ by definition.
By \cite[Theorem~5.5]{Thi20arX:grSubhom}, we also have $\gr(A)=\grPre(A)$ whenever $A$ is subhomogeneous.
By \cite[Theorem~3.13]{Thi12arX:GenRnk}, we have $\gr(A)=\max\{\rr(A),\grPre(A)\}$.
In particular, if $A$ has real rank zero, then $\gr(A)=\grPre(A)$.

In general, however, it is unclear if $\gr(A)=\grPre(A)$;
see \cite[Question~3.16]{Thi12arX:GenRnk}.
\end{rmk}

For separable \ca{s}, the generator rank and its precursor can be described by the denseness of generating tuples.

\begin{thm}[{\cite[Theorem~3.4]{Thi12arX:GenRnk}}]
\label{prp:grCharSep}
Let $A$ be a \emph{separable} \ca{} and $n\in\NN$.
Then $\grPre(A)\leq n$ if and only if for every $a_0,\ldots,a_n\in A_\sa$ and $\varepsilon>0$, there exist $b_0,\ldots,b_n\in A_\sa$ such that
\[
\|b_j-a_j\|<\varepsilon \ \text{ for } j=0,\ldots,n, \andSep
A=C^*(b_0,\ldots,b_n).
\]

Analogously, we obtain a characterization of $\gr(A)\leq n$ by denseness of generating tuples in $\widetilde{A}$.
\end{thm}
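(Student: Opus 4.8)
The easy direction is the backward implication: if near every self-adjoint tuple there is a generating one, then given $a_0,\dots,a_n\in A_\sa$, $\varepsilon>0$ and $c\in A$, we may pick $b_0,\dots,b_n\in A_\sa$ with $\|b_j-a_j\|<\varepsilon$ and $C^*(b_0,\dots,b_n)=A$; since $c\in A=C^*(b_0,\dots,b_n)$ we trivially have $c\in_\varepsilon C^*(b_0,\dots,b_n)$, so $\grPre(A)\le n$.

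For the forward implication I would run a diagonal perturbation argument that uses separability. Fix a dense sequence $(c_k)_k$ in $A$ and an enumeration $((k_i,m_i))_i$ of $\NN\times\{1,2,\dots\}$ in which every pair occurs. Starting from $b^{(0)}:=(a_0,\dots,a_n)$, I would inductively construct self-adjoint tuples $b^{(i)}=(b_0^{(i)},\dots,b_n^{(i)})$ together with witnessing $*$-polynomials. Given $b^{(i)}$, apply the hypothesis $\grPre(A)\le n$ to the tuple $b^{(i)}$, the element $c_{k_i}$, and a tolerance $\varepsilon_i\le 1/(2m_i)$ to be chosen, producing $b^{(i+1)}\in A_\sa$ with $\|b_j^{(i+1)}-b_j^{(i)}\|<\varepsilon_i$ and $c_{k_i}\in_{\varepsilon_i}C^*(b^{(i+1)})$; since the generated algebra is the closure of the $*$-polynomials (without constant term) in its generators, I may fix a $*$-polynomial $p_i$ with $\|c_{k_i}-p_i(b^{(i+1)})\|<1/(2m_i)$.

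The point that makes the construction converge correctly is the stability of polynomial evaluation under perturbation of the generators: on any bounded set the map $y\mapsto p_i(y)$ is Lipschitz, so if the total future movement $\sum_{l>i}\|b^{(l+1)}-b^{(l)}\|$ is small enough (depending only on $p_i$ and on a uniform bound for the norms of the tuples), then the limit tuple $b:=\lim_i b^{(i)}$ satisfies $\|p_i(b)-p_i(b^{(i+1)})\|<1/(2m_i)$. I would therefore choose the $\varepsilon_i$ recursively so that, first, $\sum_i\varepsilon_i<\varepsilon$, which guarantees that the limit exists and $\|b_j-a_j\|<\varepsilon$, and second, for each $i$ the subsequent tolerances are small enough to preserve the approximation just achieved. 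With these choices we obtain $\|c_{k_i}-p_i(b)\|<1/m_i$ with $p_i(b)\in C^*(b)$ for every $i$; as every pair $(k,m)$ occurs among the $(k_i,m_i)$, this shows $c_k\in_{1/m}C^*(b)$ for all $k$ and all $m\ge 1$, hence $c_k\in C^*(b)$. Since $(c_k)_k$ is dense and $C^*(b)$ is closed, $C^*(b)=A$, as required.

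The main obstacle is precisely the second recursive requirement: the subalgebra generated by a tuple is not continuous in the tuple, so one cannot simply pass to a limit of approximately generating tuples and hope that the generated algebras converge. Fixing a witnessing polynomial at each stage and then constraining all later perturbations by its (local) Lipschitz constant is exactly what converts the statement ``$c_{k_i}$ lies near the algebra generated by $b^{(i+1)}$'' into ``$c_{k_i}$ lies near the algebra generated by the limit $b$''. The final assertion about $\gr$ requires no new work: applying what was just proved to the separable unital \ca{} $\widetilde{A}$ and recalling that $\gr(A)=\grPre(\widetilde{A})$ yields the characterization of $\gr(A)\le n$ by denseness of generating tuples in $\widetilde{A}$.
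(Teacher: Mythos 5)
Your proposal is correct, and it is in essence the same argument as the paper's (that is, the proof of \cite[Theorem~3.4]{Thi12arX:GenRnk}, which the paper invokes by citation): there one observes that each set $\{(b_0,\ldots,b_n)\in (A_\sa)^{n+1} : c\in_\varepsilon C^*(b_0,\ldots,b_n)\}$ is open --- openness being witnessed by a fixed polynomial together with the local Lipschitz continuity of polynomial evaluation on bounded sets, exactly your stability step --- and dense by the hypothesis $\grPre(A)\leq n$, and then one intersects these sets over a countable dense family $(c_k)_k$ and tolerances $1/m$ via the Baire category theorem in the complete metric space $(A_\sa)^{n+1}$. Your recursive choice of the tolerances $\varepsilon_i$, constrained by the Lipschitz constants of the previously fixed witnessing polynomials $p_l$, is precisely an unwinding of that Baire argument (the standard nested-ball proof specialized to these open dense sets), so the two proofs coincide in substance.
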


We will repeatedly use the following permanence properties of the generator rank, which were shown in Theorems~6.2 and~6.3 of \cite{Thi12arX:GenRnk}.

\begin{thm}
\label{prp:permanence}
Let $I\subseteq A$ be a closed, two-sided ideal in a \ca{} $A$.
Then
\[
\max\big\{ \gr(I),\gr(A/I) \big\} 
\leq \gr(A) 
\leq \gr(I)+\gr(A/I)+1.
\]
Moreover, if $A=\varinjlim_\lambda A_\lambda$ is an inductive limit, then
\[
\gr(A) \leq \liminf_\lambda\gr(A_\lambda).
\]
\end{thm}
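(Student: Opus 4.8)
My plan is to treat the three inequalities separately, reducing each to a statement about the precursor $\grPre$ of the relevant minimal unitization, since $\gr(B)=\grPre(\widetilde{B})$ by definition and $\grPre(B)\leq n$ asserts only that self-adjoint $(n+1)$-tuples in $B$ can be perturbed to approximately capture any prescribed target element. For the quotient bound $\gr(A/I)\leq\gr(A)$, I would show that a surjective $*$-homomorphism never increases $\grPre$. The quotient map induces a unital surjection $\pi\colon\widetilde{A}\to\widetilde{A}/I=\widetilde{A/I}$; given self-adjoint $\bar a_0,\dots,\bar a_n$ and a target $\bar c$ in $\widetilde{A/I}$, I lift the $\bar a_j$ to self-adjoint elements of $\widetilde{A}$ (self-adjoint elements lift along any surjection, by symmetrizing an arbitrary lift) and lift $\bar c$ to some $c$. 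Applying $\grPre(\widetilde A)\leq n$ yields a perturbation $b_0,\dots,b_n$ with $c\in_\varepsilon C^*(b_0,\dots,b_n)$, and since $\pi$ is norm-decreasing and maps $C^*(b_0,\dots,b_n)$ into $C^*(\pi(b_0),\dots,\pi(b_n))$, the images $\pi(b_j)$ are the desired perturbation of the $\bar a_j$, so $\grPre(\widetilde{A/I})\leq n$.

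For the inductive-limit bound, let $n=\liminf_\lambda\gr(A_\lambda)$, which we may assume finite, and extend the canonical maps to unital $\phi_\lambda\colon\widetilde{A_\lambda}\to\widetilde A$. Given self-adjoint $a_0,\dots,a_n\in\widetilde A$, $\varepsilon>0$ and $c\in\widetilde A$, the definition of $\liminf$ provides cofinally many indices with $\gr(A_\lambda)\leq n$, and since $\bigcup_\lambda\phi_\lambda(\widetilde{A_\lambda})$ is dense I may choose such a $\lambda$ together with self-adjoint $a_j'$ and $c'$ in $\widetilde{A_\lambda}$ whose images are $\varepsilon$-close to $a_j$ and $c$. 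Applying $\grPre(\widetilde{A_\lambda})\leq n$ perturbs the $a_j'$ to $b_j'$ with $c'\in_\varepsilon C^*(b_0',\dots,b_n')$; the images $\phi_\lambda(b_j')$ are then close to the $a_j$ and satisfy $c\in_{2\varepsilon}C^*(\phi_\lambda(b_0'),\dots,\phi_\lambda(b_n'))$, so $\grPre(\widetilde A)\leq n$.

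The ideal bound $\gr(I)\leq\gr(A)$ and the extension bound are where the real work lies, and I expect the latter to be the main obstacle. For the ideal bound I would fix a quasicentral approximate unit $(e_\mu)$ for $I$ in $\widetilde A$ and use it to push an almost-generating tuple of $\widetilde A$ back into $\widetilde I=I+\CC1$: cutting down by $e_\mu$ lands elements in $I$, while quasicentrality keeps the approximate generation of a target in $\widetilde I$ under control, and estimating this disturbance uniformly is the delicate point. For the extension bound, write $m=\gr(I)$ and $k=\gr(A/I)$, so that $(m+1)+(k+1)$ self-adjoint elements, indexed $0,\dots,m+k+1$, are available, and split them into a block of $k+1$ and a block of $m+1$. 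The first block is perturbed---project to $A/I$, generate there using $\gr(A/I)=k$, then lift self-adjointly---so that its generated $C^*$-algebra surjects onto $A/I$, while the second block, after localizing into $\widetilde I$ and perturbing via $\gr(I)=m$, is arranged to generate $\widetilde I$ and hence to contain $I$. A $C^*$-subalgebra that contains $I$ and surjects onto $A/I$ must be all of $\widetilde A$, so the full tuple generates and captures the target. The additive $+1$ in the bound is precisely the off-by-one accounting of rank versus number of generators: each piece contributes its own $+1$, so the two combine to $(m+1)+(k+1)$ generators, that is, rank at most $m+k+1$. Making this gluing rigorous---ensuring that the localized ideal generators recover all of $I$ rather than a proper subalgebra, uniformly across the perturbations---is the crux of the argument.
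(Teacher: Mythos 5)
A preliminary remark: the paper does not prove this statement at all --- it is imported verbatim from Theorems~6.2 and~6.3 of \cite{Thi12arX:GenRnk} --- so your attempt has to be measured against the proofs there, not against anything in this text. Your treatment of the two easy inequalities is correct and essentially the standard one: surjections do not increase $\grPre$ (with one small repair needed where you write $\widetilde{A}/I=\widetilde{A/I}$ --- if $A/I$ is unital but $A$ is not, then $\widetilde{A}/I\cong (A/I)\oplus\CC$, and you must apply your surjection lemma once more to pass to $A/I$), and the $2\varepsilon$-approximation argument for inductive limits goes through as sketched. Note, though, that the theorem concerns arbitrary, possibly nonseparable \ca{s}, while your repeated appeals to tuples that \emph{generate} rest on \autoref{prp:grCharSep}, which requires separability; the whole argument must instead be run with the target-element formulation of \autoref{dfn:gr}.

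The genuine gaps are in the two hard inequalities, and in the extension bound one step fails outright. ``Localizing the second block into $\widetilde I$'' is not a legitimate move: the perturbed tuple must be $\varepsilon$-close to the original one, whereas an arbitrary self-adjoint $a\in A$ has distance to $I+\CC 1$ comparable to $\|a\|$, so no small perturbation places the second block in $\widetilde I$. The workable mechanism is different: once the first block $b_0,\ldots,b_k$ is arranged so that $C^*(b_0,\ldots,b_k)+I=A$ (this sum is a sub-\ca{} containing $I$ with full image in $A/I$, so it is everything --- the one observation you do make correctly), one decomposes each remaining element as $a_j=c_j+x_j$ with $c_j\in C^*(b_0,\ldots,b_k)$ and $x_j\in I_\sa$, and perturbs only the ideal components $x_j$. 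Even then the crux you flag remains open: the generated algebra contains the sums $c_j+y_j$, not the elements $y_j$ themselves, so capturing a target requires a delicate interaction argument, and that argument is precisely the content of the proof in \cite{Thi12arX:GenRnk}.

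For the ideal bound $\gr(I)\leq\gr(A)$, the quasicentral-approximate-unit plan also does not survive scrutiny as stated: compression $b_j\mapsto e b_j e$ can collapse $C^*(b_0,\ldots,b_n)$ drastically, and quasicentrality gives no estimate relating the distance of a target $c\in\widetilde I$ to $C^*(b_0,\ldots,b_n)$ with its distance to $C^*(eb_0e,\ldots,eb_ne)$ --- approximate generation is simply not stable under cut-downs. You correctly identify this and the extension gluing as the delicate points, but identifying them is where the actual work begins; as it stands the proposal proves only half of the theorem.
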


It is natural to expect that the generator rank of the direct sum of two \ca{s} is the maximum of the generator ranks of the summands.
We have verified this in the case that both summands have real rank zero (see \autoref{prp:gr_sum_rr0} below), and whenever one of the summands is subhomogeneous (\cite[Proposition~5.8]{Thi20arX:grSubhom}).
However, in general this remains unclear;
see \cite[Question~6.4]{Thi12arX:GenRnk}.

\begin{prp}[{\cite[Lemma~7.1]{Thi12arX:GenRnk}}]
\label{prp:gr_sum_rr0}
Let $A, B$ be \ca{s} of real rank zero.
Then $\gr(A\oplus B)=\max\{\gr(A),\gr(B)\}$.
\end{prp}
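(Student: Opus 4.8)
The plan is to establish the two inequalities separately. The lower bound $\max\{\gr(A),\gr(B)\}\leq\gr(A\oplus B)$ is immediate from \autoref{prp:permanence}: the summand $A$ is an ideal in $A\oplus B$ with quotient isomorphic to $B$, so both $\gr(A)$ and $\gr(B)$ are dominated by $\gr(A\oplus B)$. All of the work lies in the reverse inequality $\gr(A\oplus B)\leq n$, where $n:=\max\{\gr(A),\gr(B)\}$.

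For the upper bound I would first reduce to the unital case. Since real rank zero passes to minimal unitizations, $\widetilde A$ and $\widetilde B$ are \emph{unital} \ca{s} of real rank zero; moreover, from the definition $\gr(A)=\grPre(\widetilde A)$ together with the fact that $\gr=\grPre$ on unital algebras, we get $\gr(\widetilde A)=\grPre(\widetilde A)=\gr(A)$, and likewise for $B$. Granting the unital case of the proposition applied to $\widetilde A\oplus\widetilde B$, this yields $\gr(\widetilde A\oplus\widetilde B)=\max\{\gr(\widetilde A),\gr(\widetilde B)\}=n$. As $A\oplus B$ is an ideal in $\widetilde A\oplus\widetilde B$, a second application of \autoref{prp:permanence} gives $\gr(A\oplus B)\leq\gr(\widetilde A\oplus\widetilde B)=n$, as desired. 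There is no circularity, since the unital case is proved on its own below.

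So assume $A$ and $B$ are unital. Then $A\oplus B$ has real rank zero, so $\gr(A\oplus B)=\grPre(A\oplus B)$, and it suffices to verify the defining condition of $\grPre(A\oplus B)\leq n$. Fix self-adjoint tuples $(a_j)_{j=0}^n$ in $A$ and $(b_j)_{j=0}^n$ in $B$, an element $(c,d)\in A\oplus B$, and $\varepsilon>0$. Using $\grPre(A)\leq n$, I choose $\hat a_0,\dots,\hat a_n$ within $\varepsilon/2$ of the $a_j$ together with a $*$-polynomial $P$ satisfying $\|c-P(\hat a_0,\dots,\hat a_n)\|<\varepsilon/4$; symmetrically I obtain $\hat b_0,\dots,\hat b_n$ and a polynomial $Q$ for $d$. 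The key observation is that the relation $c\in_\varepsilon C^*(\cdots)$ is certified by the single polynomial $P$ and is therefore stable under small perturbations of the generators (unlike the fragile property of generating the entire algebra). Exploiting this, I keep $\hat a_1,\dots,\hat a_n$ and $\hat b_1,\dots,\hat b_n$ fixed and use real rank zero to replace $\hat a_0,\hat b_0$ by self-adjoint elements $a_0',b_0'$ of \emph{finite} spectrum, chosen so close to $\hat a_0,\hat b_0$ that $P$ and $Q$ still approximate $c$ and $d$ to within $\varepsilon/2$ after the replacement. Because $\sigma(a_0')$ and $\sigma(b_0')$ are finite, an arbitrarily small further adjustment arranges $\sigma(a_0')\cap\sigma(b_0')=\varnothing$. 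Now the argument closes: with $x_0:=(a_0',b_0')$ one has $\sigma(x_0)=\sigma(a_0')\sqcup\sigma(b_0')$, so a continuous function $g$ equal to $1$ on $\sigma(a_0')$ and $0$ on $\sigma(b_0')$ yields $g(x_0)=(1_A,0)\in C:=C^*\big((a_0',\hat a_1,\dots,\hat a_n),(b_0',\hat b_1,\dots,\hat b_n)\big)$, and symmetrically $(0,1_B)\in C$. Compressing $C$ by these projections shows $C^*(a_0',\hat a_1,\dots,\hat a_n)\oplus 0\subseteq C$ and $0\oplus C^*(b_0',\hat b_1,\dots,\hat b_n)\subseteq C$, so $C$ splits as the direct sum of these two generated subalgebras. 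Since $P(a_0',\hat a_1,\dots,\hat a_n)$ lies in the first summand within $\varepsilon/2$ of $c$, and the analogous statement holds for $d$ in the second, we get $(c,d)\in_\varepsilon C$, which is exactly the condition defining $\grPre(A\oplus B)\leq n$.

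I expect the main obstacle to be precisely the interaction between generation and separation. To produce the separating projection $(1_A,0)$ one must make the spectra of the first pair of generators disjoint, and this cannot be done for generators of full spectrum by a small perturbation; it is the passage to finite spectra, hence the real rank zero hypothesis, that makes disjointness achievable. The device that lets one perform this spectral adjustment \emph{after} having already secured generation is the stability remark above: membership $c\in_\varepsilon C^*(\cdots)$ is witnessed by a fixed polynomial and so survives small perturbations of the generators, whereas the property of generating the whole algebra does not. Keeping this distinction in mind is what makes the two requirements compatible.
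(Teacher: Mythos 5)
The paper itself gives no proof of this proposition --- it simply cites \cite[Lemma~7.1]{Thi12arX:GenRnk} --- and your argument follows the same standard route used there and echoed in the paper's own proofs of \autoref{prp:UHFabs} and \autoref{prp:construction}: work with the defining condition of $\grPre$, note that membership $c\in_\varepsilon C^*(\cdots)$ is witnessed by a fixed polynomial and hence survives small perturbations, use real rank zero to replace one generator in each summand by a finite-spectrum element, make the two spectra disjoint, and extract the central projections $(1_A,0)$ and $(0,1_B)$ by functional calculus to split the generated subalgebra. Your reduction to the unital case is also sound, since both the real rank and the generator rank of a nonunital algebra are defined through the minimal unitization, so $\rr(\widetilde{A})=\rr(A)=0$ and $\gr(\widetilde{A})=\grPre(\widetilde{A})=\gr(A)$, and $A\oplus B$ sits as an ideal in $\widetilde{A}\oplus\widetilde{B}$, so \autoref{prp:permanence} applies.

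There is one genuine pinhole in the functional-calculus step. For a self-adjoint $x_0=(a_0',b_0')$, the algebra $C^*(x_0)$ consists exactly of the elements $f(x_0)$ with $f$ continuous on $\sigma(x_0)\cup\{0\}$ and $f(0)=0$. Hence $g(x_0)=(1_A,0)$ is guaranteed to lie in $C$ only if $g$ can be chosen with $g(0)=0$, which forces $0\notin\sigma(a_0')$; symmetrically, the function producing $(0,1_B)$ must vanish on $\sigma(a_0')\cup\{0\}$, forcing $0\notin\sigma(b_0')$. Your adjustment only arranges $\sigma(a_0')\cap\sigma(b_0')=\varnothing$; if, say, $0\in\sigma(a_0')$, then $g(x_0)$ lives in the unitization of $C^*(x_0)$, and there is no reason it belongs to $C$, since the unit $(1_A,1_B)$ need not lie in $C$. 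The repair is the device you already use, and precisely what the paper does in the proof of \autoref{prp:UHFabs}: real rank zero lets you choose $a_0'$ and $b_0'$ \emph{invertible} with finite spectrum, and a further arbitrarily small shift makes the two spectra disjoint from each other \emph{and} from $0$ (compare the choice of $\mu$ separating the points of $\sigma(a)\cup\{0\}$ there). With that amendment, your proof is complete and is essentially the argument of the cited lemma.
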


\begin{prp}[{\cite[Proposition~5.6]{Thi12arX:GenRnk}}]
\label{prp:gr0}
A \ca{} $A$ has generator rank zero if and only if $A$ is commutative with totally disconnected spectrum.
\end{prp}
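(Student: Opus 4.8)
The plan is to treat the two implications separately, reducing the unital computation of $\grPre$ to a statement about a commutative compact space.

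For the ``if'' direction, suppose $A$ is commutative with totally disconnected spectrum, so $A\cong C_0(X)$ with $X$ locally compact, Hausdorff and totally disconnected. Then $\widetilde{A}\cong C(Y)$, where $Y$ is the one-point compactification of $X$ (or $X$ with an added isolated point if $X$ is already compact); in either case $Y$ is compact, Hausdorff and totally disconnected. Since $\gr(A)=\grPre(\widetilde{A})=\grPre(C(Y))$, it suffices to prove $\grPre(C(Y))=0$. Given $f\in C(Y)_\sa$, an element $c\in C(Y)$ and $\varepsilon>0$, I would use that a compact totally disconnected space has a basis of clopen sets to choose a finite clopen partition $Y=\bigsqcup_{i=1}^k U_i$ on each piece of which $f$ and $c$ vary by less than $\varepsilon$. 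I then set $g:=\sum_{i=1}^k\lambda_i\mathbf{1}_{U_i}$, where the $\lambda_i$ are chosen \emph{distinct} and \emph{nonzero} and within $\varepsilon$ of the values of $f$ on $U_i$; this is possible because we may arrange the partition to leave a little room. The key point is that $\sigma(g)=\{\lambda_1,\dots,\lambda_k\}$ is finite and avoids $0$, so $\sigma(g)\cup\{0\}$ is discrete and every function on it is continuous; applying continuous functional calculus to the indicator of a single $\lambda_i$ then shows $\mathbf{1}_{U_i}\in C^*(g)$ for each $i$. Hence $C^*(g)$ contains every function that is constant on each $U_i$, and in particular a function approximating $c$ to within $\varepsilon$, giving $c\in_\varepsilon C^*(g)$ while $\|g-f\|<\varepsilon$.

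For the ``only if'' direction, suppose $\gr(A)=0$, that is, $\grPre(\widetilde{A})=0$, and write $B:=\widetilde{A}$. I would first show that $B$ is commutative. Fixing $c\in B$ and applying the defining property of $\grPre(B)=0$ with $a_0=0$ and $\varepsilon=1/n$ produces $b_n\in B_\sa$ and $c_n\in C^*(b_n)$ with $\|c-c_n\|<1/n$. Each $c_n$ lies in the commutative \ca{} $C^*(b_n)$ and is therefore normal; since $x\mapsto x^*x-xx^*$ is norm-continuous and $c_n\to c$, the element $c$ is normal. Thus every element of $B$ is normal, and expanding $(x+iy)^*(x+iy)=(x+iy)(x+iy)^*$ for self-adjoint $x,y$ forces $xy=yx$, so $B$, and hence $A$, is commutative. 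Writing $A\cong C_0(X)$, it remains to see that $X$ is totally disconnected. Here I would invoke the identity $\gr(A)=\max\{\rr(A),\grPre(A)\}$ recorded in the Remark above: $\gr(A)=0$ yields $\rr(A)=0$, and a commutative \ca{} has real rank zero exactly when its spectrum is totally disconnected (see \cite[Section~V.3.2]{Bla06OpAlgs}), completing the proof.

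The step I expect to require the most care is the construction of the single generator $g$ in the ``if'' direction: because $C^*(g)$ denotes the (a priori non-unital) sub-\ca{} generated by $g$, a careless choice with $0\in\sigma(g)$ would fail to place the relevant indicator functions, and in particular the unit of $C(Y)$, into $C^*(g)$. Choosing the values $\lambda_i$ distinct \emph{and nonzero} is precisely what repairs this, and keeping enough slack in the clopen partition to make such a choice while staying within $\varepsilon$ of $f$ is the one genuinely delicate bookkeeping point.
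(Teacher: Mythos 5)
The paper offers no proof of this proposition---it is imported verbatim from \cite[Proposition~5.6]{Thi12arX:GenRnk}---so there is no in-paper argument to compare against; judged on its own terms, your proof is correct. In the ``if'' direction, the construction of the single self-adjoint generator $g$ over a finite clopen partition is sound, and you correctly identified the one real trap: since $C^*(g)$ is the possibly nonunital sub-\ca{} generated by $g$, the eigenvalues $\lambda_i$ must be chosen distinct \emph{and} nonzero so that functional calculus (with functions vanishing at $0$) yields all the indicator functions, and in particular the unit. In the ``only if'' direction, the limit-of-normals argument (each approximant lies in the commutative algebra $C^*(b_n)$, normality is norm-closed, and an algebra of normal elements is commutative) is the standard route to commutativity of $\widetilde{A}$, and deducing total disconnectedness from $\rr(A)\leq\gr(A)=0$ via \cite[Theorem~3.13]{Thi12arX:GenRnk} is legitimate and non-circular, since that identity precedes Proposition~5.6 in the cited reference (in fact only the lower bound $\rr(A)\leq\gr(A)$ of \cite[Proposition~3.10]{Thi12arX:GenRnk} is needed). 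One small slip: with the paper's convention, $\widetilde{A}$ is the \emph{minimal} unitization, so $\widetilde{A}=A$ when $A$ is unital---this is precisely why $\gr(A)=\grPre(A)$ holds ``by definition'' for unital $A$, as recorded in \autoref{sec:grPrelim}; your parenthetical about adjoining an isolated point to a compact spectrum describes the forced unitization instead. The slip is harmless, since under either convention the relevant spectrum $Y$ is compact, Hausdorff and totally disconnected, and your construction goes through verbatim.
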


\begin{rmk}
\label{rmk:gr1}
Let $A$ be a separable \ca.
Since two self-adjoint elements $a$ and $b$ generate the same sub-\ca{} as the element $a+ib$, and since the set of generators in $A$ is always a $G_\delta$-subset, we have $\grPre(A)\leq 1$ if and only if the set of (non-self-adjoint) generators in $A$ is a dense $G_\delta$-subset;
see \cite[Remark~3.7]{Thi12arX:GenRnk}.

If $A$ has real rank zero, then $\grPre(A)=\gr(A)$.
Thus, a separable \ca{} $A$ of real rank zero has generator rank at most one if and only if a generic element in $A$ is a generator.
\end{rmk}

We end this section with a standard result that will be used in the proof of \autoref{prp:construction}.
We include a proof for completeness.
For the definition and basic results of $C(X)$-algebras, we refer to \S{2} in \cite{Dad09CtsFieldsOverFD}.

Given an element $a$ in a $C(X)$-algebra, and $x\in X$, we let $a(x)$ denote the image of $a$ in the fiber at $x$.
We will repeatedly use that the map $X\to\RR$, $x\mapsto\|a(x)\|$, is upper semicontinuous, and that $\|a\|=\sup_{x\in X}\|a(x)\|$;
see \cite[Lemma~2.1]{Dad09CtsFieldsOverFD}.

\begin{lma}
\label{prp:genCenter}
Let $X$ be a compact, Hausdorff space, let $A$ be a unital $C(X)$-algebra, and let $B\subseteq A$ be a sub-\ca.
Then the following are equivalent:
\begin{enumerate}
\item[(a)]
We have $C(X)\subseteq B$;
\item[(b)]
$B$ separates the points in $X$ in the sense that for every distinct $x_0,x_1\in X$ there exists $b\in B$ with $b(x_0)=0$ and $b(x_1)=1$.
\end{enumerate}
\end{lma}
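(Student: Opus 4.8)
The plan is to prove the two implications separately, with the substantive content lying in showing that (b) implies (a); the reverse implication (a) $\Rightarrow$ (b) is essentially immediate.

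\textbf{The direction (a) $\Rightarrow$ (b).} Assume $C(X)\subseteq B$. Given distinct points $x_0,x_1\in X$, I use that $X$ is compact Hausdorff, hence completely regular (indeed normal), to find by Urysohn's lemma a function $f\in C(X)$ with $f(x_0)=0$ and $f(x_1)=1$. Viewing $C(X)$ as the center of $A$ acting by the structure map, the element $b:=f\cdot 1_A\in C(X)\subseteq B$ satisfies $b(x)=f(x)1_{A_x}$ in each fiber $A_x$, so $b(x_0)=0$ and $b(x_1)=1$. This gives (b).

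\textbf{The direction (b) $\Rightarrow$ (a).} This is where the real work is, and I expect the main obstacle to be organizing a Stone--Weierstrass argument in the $C(X)$-algebra setting rather than for scalar-valued functions. The strategy is to show that the central copy of $C(X)$ inside $A$ lies in the closure of the unital $*$-subalgebra generated by products of elements of $B$ with their \emph{fiberwise norms}. The key idea is that for each $b\in B$, the function $x\mapsto\|b(x)\|$ is only upper semicontinuous in general, so I cannot directly feed norms of elements of $B$ into a scalar Stone--Weierstrass argument. Instead, I plan to exploit that $B$ is a \emph{subalgebra} (closed under products and adjoints): for $b\in B$ and a polynomial $p$ with $p(0)=0$, the element $p(b^*b)\in B$ has fiber $p(b(x)^*b(x))$, and I can use continuous functional calculus within $B$. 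The precise route is to first reduce to separating a point $x_0$ from a closed set not containing it, then produce a central function in $C(X)$ supported near $x_0$.

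\textbf{Assembling the separation argument.} Concretely, I would fix $x_0\in X$ and an open neighborhood $U$ of $x_0$, and aim to produce $g\in \overline{B}\cap C(X)\cdot 1_A$ with $g(x_0)=1$ and $g$ small off $U$; the collection of such functions, being a subalgebra of $C(X)$ that separates points (by hypothesis (b)) and contains the constants and is closed, must be all of $C(X)$ by the classical Stone--Weierstrass theorem, giving $C(X)\subseteq B$ since $B$ is closed. To build $g$: for each $x_1\notin U$, choose by (b) an element $b_{x_1}\in B$ with $b_{x_1}(x_0)=0$ and $b_{x_1}(x_1)=1$; then $c_{x_1}:=b_{x_1}^*b_{x_1}\in B_\sa$ satisfies $c_{x_1}(x_0)=0$ and $\|c_{x_1}(x_1)\|\geq 1$. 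By upper semicontinuity of $x\mapsto\|c_{x_1}(x)\|$ and continuity considerations, a finite collection of such elements combined via functional calculus and multiplication produces a central element that is near $0$ at $x_0$ and bounded below on the compact set $X\setminus U$; normalizing and passing to $1-(\cdot)$ yields the desired $g$. The main delicacy throughout is ensuring the elements constructed are genuinely \emph{central}, i.e.\ land in $C(X)\cdot 1_A$ rather than merely in $B$; this is handled by noting that an element of $A$ whose every fiber is a scalar multiple of the identity is automatically central, and the functional-calculus combinations above are designed to have exactly this property in the limit.
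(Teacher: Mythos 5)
Your two covering steps are sound and in fact reproduce the paper's intermediate claims: taking $c_{x_1}:=b_{x_1}^*b_{x_1}$, using upper semicontinuity of $x\mapsto\|c_{x_1}(x)-1\|$ to extract a finite subcover of $X\setminus U$, summing, and applying a continuous function vanishing on $(-\infty,\tfrac12]$ and equal to $1$ on $[\tfrac12,\infty)$ does yield $h\in B$ with $h(x_0)=0$ and $h|_{X\setminus U}=1$ (the crucial point being that $b_{x_1}(x_1)$ is \emph{exactly} the identity of the fiber, so the relevant fibers have spectrum in $[\tfrac12,\infty)$ and functional calculus sends them to exactly $1$). But the element $g=1-h$ you then form is \emph{not} central: at points $x\in U\setminus\{x_0\}$ the fiber $h(x)$ is an arbitrary positive contraction, not a scalar multiple of the identity. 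So $g\notin C(X)\cdot 1_A$, the collection of such $g$'s is not a subalgebra of $C(X)$, and your plan to apply the classical Stone--Weierstrass theorem to $B\cap C(X)\cdot 1_A$ never gets off the ground --- a priori that intersection could consist of nothing beyond the scalars you have not yet produced. Your hedge that the functional-calculus combinations are ``central in the limit'' is precisely the missing idea: no limiting procedure producing central elements is described, and no finite combination of such Urysohn-type elements will have scalar fibers everywhere. (A secondary gap: ``passing to $1-(\cdot)$'' presupposes $1\in B$, which requires its own covering argument; the paper does this as a preliminary step.)

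The paper closes exactly this gap, and it does so \emph{without} invoking Stone--Weierstrass. Given continuous $f\colon X\to[0,\infty)$ and $\varepsilon>0$, it builds Urysohn-type elements $b_k\in B$ with $0\leq b_k\leq 1$, $b_k=1$ on $F_k:=\{f\geq k\varepsilon\}$ and $b_k=0$ off $G_k:=\{f>(k-1)\varepsilon\}$ (its Claim~2, obtained by the same covering technique you use), and then forms the layer-cake sum $b:=\varepsilon\sum_{k\geq 0}b_k\in B$. Each $b_k$ is non-central, but the fiberwise operator inequalities $f(x)1\leq b(x)\leq (f(x)+2\varepsilon)1$ force $\|b-f\|\leq 2\varepsilon$, where $f$ is viewed as a central element of $A$; since $B$ is a closed subalgebra, it follows that $f\in B$, and hence $C(X)\subseteq B$. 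In other words, the correct mechanism is that non-central elements of $B$ can be made uniformly close to a prescribed central element, with centrality obtained only in the norm limit via closedness of $B$ --- not by forcing any constructed element to have scalar fibers. With this layer-cake step supplied, your outline becomes the paper's proof; as written, the final assembly step fails.
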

\begin{proof}
Assuming~(b), we need to verify~(a).
We first show that $1\in B$.
For every $y\in X$, there exists $b_y\in B$ with $b_y(y)=1$.
Replacing $b_y$ by $b_yb_y^*$, we may assume that $b_y$ is positive.
Set $U_y:=\{z\in X : \|1-b_y(z)\|<\tfrac{1}{2}\}$, which is open since $\|1-b_y(\cdot)\|$ is upper semicontinuous.
Hence, the compact set $X$ is covered by $(U_y)_{y\in X}$, which allows us to choose $y_1,\ldots,y_N\in X$ such that $X$ is covered by $\bigcup_{j=1}^N U_{y_j}$.
Set $b:=\sum_{j=1}^N b_{y_k}\in B$.
Let $f\colon\RR\to[0,1]$ be a continuous function with $f(0)=0$ and such that $f$ takes value $1$ on $[\tfrac{1}{2},\infty)$.
Set $c:=f(b)\in B$.
For each $x$, we have $b(x)\geq\tfrac{1}{2}$ and therefore $c(x)=1$.
It follows that $c=1$, since $\|c-1\|=\sup_{x\in X}\|c(x)-1\|=0$.

\emph{Claim~1: Let $F\subseteq X$ be closed and $x\in X\setminus F$.
Then there exists a positive $b\in B$ with $b|_F=0$ and $b(x)=1$.}
To prove the claim, let $y\in F$.
By assumption, there is $b_y\in B$ such that $b_y(y)=0$ and $b_y(x)=1$.
As above, we may assume that $b_y$ is positive, and we may then find $y_1,\ldots,y_N\in F$ such that $F$ is covered by the sets $U_j:=\{z\in X : \|b_{y_j}(z)\|<\tfrac{1}{2}\}$ for $j=1,\ldots,N$.
Let $g\colon\RR\to[0,1]$ be a continuous function with $g(1)=1$ and such that $g$ takes the value $0$ on $(-\infty,\tfrac{1}{2}]$.
For each $j$, set $c_j:=g(b_{y_j})\in B$.
Then $c_j$ vanishes on $U_{y_j}$, and $c_j(x)=1$.
Thus, $b:=(c_1c_2\cdots c_N)(c_1c_2\cdots c_N)^*$ has the desired properties, which proves the claim.

\emph{Claim~2: Let $F\subseteq G\subseteq X$ with $F$ closed and $G$ open.
Then there exists $b\in B$ with $0\leq b\leq 1$, $b|_{X\setminus G}=0$ and $b|_F=1$.}
To prove the claim, set $C:=X\setminus G$.
Applying Claim~1, and using that $1\in B$, for each $y\in C$ we obtain a positive $b_y\in B$ such that $b_y|_F=1$ and $b_y(y)=0$.
As in Claim~1, we find $y_1,\ldots,y_M$ such that $C$ is covered by the sets $\{z\in X : \|b_{y_j}(z)\|<\tfrac{1}{2}\}$ for $j=1,\ldots,N$.
With $g$ as above, the element $d:=g(b_{y_1})\cdots g(b_{y_M})$ satisfies $d|_C=0$ and $d|_F=1$.
Set $b:=h(dd^*)$, where $h\colon\RR\to\RR$ is the function $h(t)=\min\{t,1\}$.
Then $b$ has the claimed properties.

\emph{Claim~3: Let $f\colon X\to[0,\infty)$ be continuous, and let $\varepsilon>0$.
Then there exists $b\in B$ with $\|b-f\|\leq 2\varepsilon$.}
Set $b_0:=1$.
For each $k\geq 1$, set
\[
F_k := \big\{ x\in X : f(x)\geq k\varepsilon \big\}, \andSep 
G_k := \big\{ x\in X : f(x)>(k-1)\varepsilon \big\}.
\]
Then $G_k$ is open, $F_k$ is closed, and $F_k\subseteq G_k$.
Using Claim~2, choose $b_k\in B$ such that $0\leq b_k\leq 1$, $b_k|_{X\setminus G_k}=0$ and $b_k|_{F_k}=1$.

Note that $b_k=0$ for large enough $k$, which allows us to set $b:=\varepsilon\sum_{k=0}^\infty b_k$.
For each $x\in X$, we have $f(x)\leq b(x)\leq f(x)+2\varepsilon$ by construction, and thus $\|b(x)-f(x)\|\leq 2\varepsilon$.
Hence, $\|b-f\|=\sup_{x\in X}\|b(x)-f(x)\|\leq 2\varepsilon$.

It follows from Claim~3 that $B$ contains every positive function in $C([0,1])$, which implies the result.
\end{proof}

\section{Generator rank of C([0,1],A)}
\label{sec:tensI}

Throughout this section, we let $A$ denote a unital, separable \ca{} of real rank zero and generator rank at most one, and we set $B:=A\otimes C([0,1])$.
We consider $B$ with its natural $C([0,1])$-algebra structure, with each fiber isomorphic to $A$.
The goal is to verify $\gr(B)\leq 6$, which we accomplish in a series of Lemmas.

\begin{lma}
\label{prp:approxGenFibers}
Let $x_1,\ldots,x_4\in B_\sa$ and $\varepsilon>0$.
Then there exist $y_1,\ldots,y_4\in B_\sa$ such that
\[
\| y_k - x_k \| < \varepsilon\ \text{ for  } k=1,\ldots,4, \andSep A=C^*(y_1(t),\ldots,y_4(t))\ \text{ for all } t\in[0,1].
\]
\end{lma}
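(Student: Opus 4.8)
The plan is to exploit that generating \emph{pairs} are dense in $A_\sa\times A_\sa$, and to propagate two such pairs continuously across $[0,1]$ in a leapfrog fashion, so that at every parameter value at least one of the two pairs already generates the fiber $A$. Since $A$ is separable of real rank zero with $\gr(A)\le 1$, the remark following \autoref{dfn:gr} gives $\grPre(A)=\gr(A)\le 1$, so by \autoref{prp:grCharSep}, for all $a,a'\in A_\sa$ and $\eta>0$ there is a pair $(b,b')\in A_\sa\times A_\sa$ with $\|b-a\|<\eta$, $\|b'-a'\|<\eta$ and $A=C^*(b,b')$. This density of nearby generating pairs is the only structural input I will use.

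First I would fix $\delta:=\varepsilon/4$ and, using uniform continuity of the maps $t\mapsto x_k(t)$ together with compactness of $[0,1]$, choose a partition $0=t_0<t_1<\cdots<t_n=1$ fine enough that $\|x_k(s)-x_k(t)\|<\varepsilon/4$ whenever $s,t$ lie in a common subinterval, for each $k=1,\dots,4$. At each node $t_i$ I apply the density statement twice: once to $(x_1(t_i),x_2(t_i))$ to obtain a generating pair $(p_i,q_i)$ within $\delta$, and once to $(x_3(t_i),x_4(t_i))$ to obtain a generating pair $(r_i,s_i)$ within $\delta$. Thus $A=C^*(p_i,q_i)=C^*(r_i,s_i)$, and the prescribed fiber value $\big(p_i,q_i,r_i,s_i\big)$ at $t_i$ generates $A$.

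On each subinterval $[t_{i-1},t_i]$ I would connect the node values by a two-phase affine interpolation. In the first half I move the first slot, letting $(y_1,y_2)$ travel affinely from $(p_{i-1},q_{i-1})$ to $(p_i,q_i)$ while holding $(y_3,y_4)$ fixed at the generating pair $(r_{i-1},s_{i-1})$; in the second half I hold $(y_1,y_2)$ fixed at the generating pair $(p_i,q_i)$ while moving $(y_3,y_4)$ affinely from $(r_{i-1},s_{i-1})$ to $(r_i,s_i)$. The resulting $y_1,\dots,y_4$ are norm-continuous and self-adjoint, agree with the node values at every $t_i$ (so they glue across subintervals), and hence define elements of $B$. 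By construction, at each $t$ at least one of $(y_1(t),y_2(t))$ or $(y_3(t),y_4(t))$ equals a fixed generating pair, so $A=C^*\big(y_1(t),\dots,y_4(t)\big)$, giving fiberwise generation throughout. The norm estimate is routine: for $t\in[t_{i-1},t_i]$ each $y_k(t)$ is a convex combination of elements lying within $\delta$ of $x_k$ evaluated at $t_{i-1}$ or $t_i$, so $\|y_k(t)-x_k(t)\|<\delta+\tfrac{\varepsilon}{4}<\varepsilon$.

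The only real difficulty is reconciling fiberwise generation along the \emph{continuous} path with the requirement that all four functions stay $\varepsilon$-close to the originals; one cannot simply perturb $(x_1,x_2)$ to a generating pair fiberwise and interpolate, since generation is a $G_\delta$ (not an open) condition and would typically be destroyed at intermediate parameters. The leapfrog resolves this: having two independent slots lets me always keep one generating pair frozen while deforming the other, and because generating pairs are dense near the nearly constant fiber values on each short subinterval, the frozen pairs can be chosen within $\delta$ of $x_3,x_4$ (resp.\ $x_1,x_2$) without violating closeness. This is precisely why four self-adjoint elements — two slots of two — suffice.
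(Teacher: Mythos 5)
Your proposal is correct and takes essentially the same route as the paper's proof: both approximate the four functions at finitely many nodes by generating pairs (dense in $A_\sa\times A_\sa$ since $A$ is unital, separable with $\gr(A)\leq 1$, via \autoref{prp:grCharSep}) and then interleave the two pairs so that at every $t\in[0,1]$ at least one pair is locally constant equal to a node generating pair. The paper's offset partitions of unity $(f_j)$ and $(g_j)$ implement precisely your two-phase ``leapfrog'' affine interpolation, with the same $\delta$-plus-oscillation norm estimate.
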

\begin{proof}
Using that each $x_j\colon[0,1]\to A$ is uniformly continuous, choose $N\geq 1$ such that $\|x_j(t)-x_j(t')\|<\tfrac{\varepsilon}{3}$ for all $t,t'\in[0,1]$ with $|t-t'|\leq\tfrac{1}{N}$.

Let $j\in\{0,\ldots,N\}$.
Then $x_1(\tfrac{2j}{2N})$ and $x_2(\tfrac{2j}{2N})$ belong to $A_\sa$.
Using that $A$ is unital and separable with $\gr(A)\leq 1$, we can apply \autoref{prp:grCharSep} to obtain $y_{1}^{(2j)},y_{2}^{(2j)}\in A_\sa$ such that
\[
\left\| y_{1}^{(2j)} - x_1(\tfrac{2j}{2N}) \right\| < \frac{\varepsilon}{3}, \quad
\left\| y_{2}^{(2j)} - x_2(\tfrac{2j}{2N}) \right\| < \frac{\varepsilon}{4}, \andSep
A = C^*(y_{1}^{(2j)},y_{2}^{(2j)}). 
\]
Analogously, we obtain $y_3^{(2j)},y_4^{(2j)}\in A_\sa$ such that
\[
\left\| y_3^{(2j)} - x_3(\tfrac{2j}{2N}) \right\| < \frac{\varepsilon}{3}, \quad
\left\| y_4^{(2j)} - x_4(\tfrac{2j}{2N}) \right\| < \frac{\varepsilon}{3}, \andSep
A = C^*(y_3^{(2j)},y_4^{(2j)}).
\]

For $j\in\{0,\ldots,N\}$ let $f_j,g_j\colon[0,1]\to[0,1]$ be continuous functions such that:
\begin{enumerate}
\item
$f_j$ takes the value $1$ on $[\tfrac{2j}{2N},\tfrac{2j+1}{2N}]$ for $j=0,\ldots,N-1$;
\item
the collection $(f_j)_{j=0,\ldots,N}$ is a partition of unity subordinate to the family $\{ [0,\tfrac{2}{2N}),(\tfrac{1}{2N},\tfrac{4}{2N}), \ldots,(\tfrac{2N-3}{2N},\tfrac{2N}{2N}), (\tfrac{2N-1}{2N},\tfrac{2N}{2N}] \}$;
\item
$g_j$ takes the value $1$ on $[\tfrac{2j-1}{2N},\tfrac{2j}{2N}]$ for $j=1,\ldots,N$;
\item
the collection $(g_j)_{j=0,\ldots,N}$ is a partition of unity subordinate to the family $\{ [0,\tfrac{1}{2N}),(\tfrac{0}{2N},\tfrac{3}{2N}), \ldots,(\tfrac{2N-4}{2N},\tfrac{2N-1}{2N}), (\tfrac{2N-2}{2N},\tfrac{2N}{2N}] \}$.
\end{enumerate}
The functions are depicted in the following picture:
\[
\makebox{
\begin{tikzpicture}[xscale=0.5]
\draw [help lines, ->] (-0.1,0) -- (14.2,0);
\draw [help lines, ->] (0,-0.1) -- (0,1.5);
\draw [thick] (0,1) -- (1,1);
\draw [thick] (1,1) -- (2,0);
\draw [thick] (1,0) -- (2,1);
\draw [thick] (2,1) -- (3,1);
\draw [thick] (3,1) -- (4,0);
\draw [thick] (-0.1,1) -- (0,1);
\node [left] at (0,1) {$1$};
\node [right] at (0,1.3) {$f_0$};
\node [right] at (2,1.3) {$f_1$};

\draw [thick] (0,0) -- (0,-0.1);
\draw [thick] (1,0) -- (1,-0.1);
\draw [thick] (2,0) -- (2,-0.1);
\node [below] at (0,-0.05) {$0$};
\node [below] at (1,-0.05) {$\tfrac{1}{2N}$};
\node [below] at (2,-0.05) {$\tfrac{2}{2N}$};
\fill (5,0.5) ellipse (0.1 and 0.05);
\fill (5.5,0.5) ellipse (0.1 and 0.05);
\fill (6,0.5) ellipse (0.1 and 0.05);
\draw [thick] (7,0) -- (8,1);
\draw [thick] (8,1) -- (9,1);
\draw [thick] (9,1) -- (10,0);
\draw [thick] (9,0) -- (10,1);
\draw [thick] (10,1) -- (11,1);
\draw [thick] (11,1) -- (12,0);
\draw [thick] (11,0) -- (12,1);
\node [right] at (7.5,1.3) {$f_{N-2}$};
\node [right] at (9.5,1.3) {$f_{N-1}$};
\node [right] at (12,1.0) {$f_{N}$};
\draw [thick] (11,0) -- (11,-0.1);
\draw [thick] (12,0) -- (12,-0.1);
\node [below] at (11,-0.05) {$\tfrac{2N-1}{2N}$};
\node [below] at (12,-0.05) {$1$};

\draw [help lines, ->] (-0.1,-2.2) -- (14.2,-2.2);
\draw [help lines, ->] (0,-2.3) -- (0,-0.7);
\draw [thick] (0,-1.2) -- (1,-2.2);
\draw [thick] (0,-2.2) -- (1,-1.2);
\draw [thick] (1,-1.2) -- (2,-1.2);
\draw [thick] (2,-1.2) -- (3,-2.2);
\draw [thick] (2,-2.2) -- (3,-1.2);
\draw [thick] (3,-1.2) -- (4,-1.2);
\draw [thick] (4,-1.2) -- (5,-2.2);
\draw [thick] (-0.1,-1.2) -- (0,-1.2);
\node [left] at (0,-1.2) {$1$};
\node [right] at (-0.1,-1.1) {$g_0$};
\node [above] at (1.5,-1.2) {$g_1$};
\node [above] at (3.5,-1.2) {$g_2$};
\draw [thick] (0,-2.2) -- (0,-2.3);
\draw [thick] (1,-2.2) -- (1,-2.3);
\draw [thick] (2,-2.2) -- (2,-2.3);
\node [below] at (0,-2.25) {$0$};
\node [below] at (1,-2.25) {$\tfrac{1}{2N}$};
\node [below] at (2,-2.25) {$\tfrac{2}{2N}$};
\fill (6,-1.7) ellipse (0.1 and 0.05);
\fill (6.5,-1.7) ellipse (0.1 and 0.05);
\fill (7,-1.7) ellipse (0.1 and 0.05);
\draw [thick] (8,-2.2) -- (9,-1.2);
\draw [thick] (9,-1.2) -- (10,-1.2);
\draw [thick] (10,-1.2) -- (11,-2.2);
\draw [thick] (10,-2.2) -- (11,-1.2);
\draw [thick] (11,-1.2) -- (12,-1.2);
\node [above] at (9.5,-1.2) {$g_{N-1}$};
\node [above] at (11.5,-1.2) {$g_{N}$};
\draw [thick] (11,-2.2) -- (11,-2.3);
\draw [thick] (12,-2.2) -- (12,-2.3);
\node [below] at (11,-2.25) {$\tfrac{2N-1}{2N}$};
\node [below] at (12,-2.25) {$1$};
\end{tikzpicture}
}
\]

Set
\[
y_1 := \sum_{j=0}^{N} y_1^{(2j)} f_{j}, \quad
y_2 := \sum_{j=0}^{N} y_2^{(2j)} f_{j}, \quad
y_3 := \sum_{j=0}^{N} y_3^{(2j)} g_{j}, \quad
y_4 := \sum_{j=0}^{N} y_4^{(2j)} g_{j}.
\]

To verify that $\|y_1-x_1\|<\varepsilon$, we estimate $\|y_1(t)-x_1(t)\|$ for $t\in[0,1]$.
For $t=1$, we have $y_1(1)=y_1^{(2N)}$, and so
\[
\left\| y_1(1) - x_1(1) \right\| 
= \left\| y_1(1) - y_1^{(2N)} \right\| < \frac{\varepsilon}{3} \leq \frac{2}{3}\varepsilon.
\]
Given $t\in[0,1)$, let $j\in\{1,\ldots,N\}$ satisfy $t\in[\tfrac{2j-2}{2N},\tfrac{2j}{2N})$.
Since $|t-\tfrac{2j-2}{2N}|,|t-\tfrac{2j}{2N}|<\tfrac{2}{2N}$, we have
\begin{align*}
\left\| y_1^{(2j-2)} - x_1(t) \right\|
&\leq \left\| y_1^{(2j-2)} - x_1(\tfrac{2j-2}{2N}) \right\| 
+ \left\| x_1(\tfrac{2j-2}{2N}) - x_1(t) \right\|
< \frac{\varepsilon}{3} + \frac{\varepsilon}{3} 
\leq \frac{2}{3}\varepsilon, \\
\left\| y_1^{(2j)} - x_1(t) \right\|
&\leq \left\| y_1^{(2j)} - x_1(\tfrac{2j}{2N}) \right\| 
+ \left\|  x_1(\tfrac{2j}{2N}) - x_1(t) \right\|
< \frac{\varepsilon}{3} + \frac{\varepsilon}{3} 
\leq \frac{2}{3}\varepsilon.
\end{align*}
Using that $y_1(t)= y_1^{(2j-2)}f_{j-1}(t)+y_1^{(2j)}f_j(t)$, and that $f_{j-1}(t)+f_{j}(t)=1$, we get
\[
\|y_1(t)-x_1(t)\|
\leq \left\| y_1^{(2j-2)} - x_1(t) \right\|f_{j-1}(t)
+ \left\| y_1^{(2j)} - x_1(t) \right\|f_{j}(t)
\leq\frac{2}{3}\varepsilon.
\]

Hence, $\|y_1-x_1\|=\sup_{t\in[0,1]}\|y_1(t)-x_1(t)\|\leq\tfrac{2}{3}\varepsilon<\varepsilon$.
Analogously, one shows that $\|y_k-x_k\|<\varepsilon$ for $k=2,3,4$.

It remains to verify that $\{y_1,\ldots,y_4\}$ generates $A$ in each fiber. 
Given $t\in[0,1]$, choose $l\in\{0,\ldots,2N-1\}$ such that $t\in[\tfrac{l}{2N},\tfrac{l+1}{2N}]$.
If $l$ is even, set $j=\tfrac{l}{2}$.
Then
\[
y_1(t) = y_1^{(2j)}, \andSep
y_2(t) = y_2^{(2j)}.
\]
If $l$ is odd, set $j=\tfrac{l+1}{2}$.
Then
\[
y_3(t) = y_3^{(2j)}, \andSep
y_4(t) = y_4^{(2j)}.
\]

Thus, either $\{y_1(t),y_2(t)\}$ or $\{y_3(t),y_4(t)\}$ generates $A$, and it follows that $A=C^*(y_1(t),\ldots,y_4(t))$ in either case.
\end{proof}

\begin{lma}
\label{prp:approxSepInterval}
Let $x_1,x_2,x_3\in B_\sa$ and $\varepsilon>0$.
Then there exist $y_1,y_2,y_3\in B_\sa$ with $\|y_k-x_k\|<\varepsilon$ for $k=1,2,3$, and such that $C^*(y_1,y_2,y_3)$ separates the points in $[0,1]$ in the sense of \autoref{prp:genCenter}(b).
\end{lma}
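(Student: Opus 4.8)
The plan is to produce, for each of the finitely many pairs of points that need to be separated, a single self-adjoint element realizing the separation, and then to assemble these into small perturbations of $x_1,x_2,x_3$. Since $[0,1]$ is compact and metrizable, the point-separation condition in \autoref{prp:genCenter}(b) need only be arranged on a suitable dense or covering family of pairs, and then propagated by approximation. I would first fix a large integer $N$ so that a partition of $[0,1]$ into subintervals of length $\tfrac{1}{N}$ is fine enough that the uniform continuity of $x_1,x_2,x_3$ guarantees their oscillation on each subinterval is below $\tfrac{\varepsilon}{3}$; this lets me modify the $x_k$ only on a controlled scale.

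The key construction is to build one of the three elements, say from $x_1$, so that it encodes the coordinate $t$ up to the resolution $\tfrac{1}{N}$, while leaving the fiberwise content essentially intact. Concretely, I would choose for each grid point $\tfrac{j}{N}$ a self-adjoint element $a_j\in A_\sa$ with $\|a_j - x_1(\tfrac{j}{N})\|<\tfrac{\varepsilon}{3}$ whose scalar part (its image under the unital trace, or simply a chosen scalar spectral value available because $A$ is unital of real rank zero) is a strictly increasing function of $j$; real rank zero is precisely what lets me adjust spectral data in small norm steps. Interpolating the $a_j$ against a partition of unity subordinate to the grid, as in the proof of \autoref{prp:approxGenFibers}, yields $y_1\in B_\sa$ with $\|y_1-x_1\|<\varepsilon$ whose values at two points lying in different grid cells differ detectably. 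For points in the \emph{same} cell I would use a second element $y_2$ (perturbed from $x_2$) built from an interpolation that is monotone on the scale $\tfrac{1}{N}$ but offset, mirroring the staggered $f_j$/$g_j$ scheme of the previous lemma, so that the two interpolations never stagnate simultaneously.

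Given such $y_1,y_2$, separating an arbitrary distinct pair $t_0\neq t_1$ reduces to functional calculus inside $C^*(y_1,y_2,y_3)$: because $y_1(t)$ and $y_2(t)$ carry a coordinate that genuinely varies between $t_0$ and $t_1$, one extracts a continuous real function $h$ of these values with $h(t_0)=0$, $h(t_1)=1$, and then invokes \autoref{prp:genCenter} to conclude $C(X)\subseteq C^*(y_1,y_2,y_3)$, which is stronger than and implies (b). I can take $y_3$ to be any perturbation of $x_3$ with $\|y_3-x_3\|<\varepsilon$, e.g.\ $y_3=x_3$, since it plays no role in the separation. The main obstacle I anticipate is ensuring that the perturbed elements separate points while keeping the perturbation below $\varepsilon$ \emph{uniformly}: a naive monotone scalar encoding might force norm changes comparable to the diameter of the spectrum rather than to $\varepsilon$. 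Overcoming this is exactly where real rank zero enters — it supplies an abundance of projections and spectral gaps in $A$, allowing the encoded coordinate to be written as a small self-adjoint increment added to $x_1(\tfrac{j}{N})$ rather than a global rescaling, so that the separating information lives in a norm-$O(\varepsilon)$ correction term while still varying strictly across grid cells.
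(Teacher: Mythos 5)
Your overall template (a grid chosen by uniform continuity, approximants at grid points obtained from real rank zero, staggered partitions of unity, separation via functional calculus) matches the paper's strategy, but there are two genuine gaps. First, two staggered families do not suffice, so you cannot discard $y_3$ by taking $y_3=x_3$: for a pair $s\neq t$ where $s$ lies in an interpolation (non-plateau) region of the first family and $t$ in an interpolation region of the second, neither $y_1$ nor $y_2$ is controlled at \emph{both} points. On an interpolation region the value is a noncommuting mix $a_jf_j(t)+a_{j+1}f_{j+1}(t)$ whose spectrum you cannot control, and a ``monotone scalar part'' does not rescue this: producing $b\in C^*(y_1,y_2,y_3)$ with $b(s)=0$ and $b(t)=1$ ultimately requires a single self-adjoint element whose values at $s$ and $t$ have \emph{disjoint} spectra, since there is no functional calculus in the two noncommuting variables $y_1(t),y_2(t)$, and your appeal to a trace fails because a unital \ca{} of real rank zero (e.g.\ $\mathcal{O}_2$) need not admit one. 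The paper uses \emph{three} staggered families $(f_j),(g_j),(h_j)$ precisely because each point of $[0,1]$ lies in the gap of at most one family, so every pair $\{s,t\}$ lies jointly in the plateaus of at least one of the three elements.

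Second, even on plateaus your proposed elements are locally constant, so two points inside the same plateau of a family cannot be separated by that family's element at all, and by the previous point they need not be cleanly covered by another family. The paper's mechanism fixing both issues is the drift term: on a plateau of the $k$-th family one has $y_k(t)=y_k^{(3j)}+t\mu$, where the $y_k^{(3j)}$ are invertible with finite spectra (this is where real rank zero enters), perturbed so that the spectra over \emph{all} $k$ and $j$ are pairwise disjoint, with $\mu$ smaller than half the minimal distance between points of $\{0\}\cup\bigcup_{k,j}\sigma(y_k^{(3j)})$. Then for $s\neq t$ both on plateaus of one family, $y_k(s)$ and $y_k(t)$ have finite disjoint spectra avoiding $0$, and a continuous $f\colon\RR\to[0,1]$ with $f(y_k(s))=0$ and $f(y_k(t))=1$ yields the separating element. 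Your write-up gestures at ``spectral gaps'' but never arranges this global disjointness nor any within-plateau variation, and without them the functional-calculus step you invoke has nothing to act on.
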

\begin{proof}
Using that each $x_j\colon[0,1]\to A$ is uniformly continuous, choose $N\geq 1$ such that $\|x_j(t)-x_j(t')\|<\tfrac{\varepsilon}{4}$ for all $t,t'\in[0,1]$ with $|t-t'|\leq\tfrac{1}{N}$.

Let $j\in\{0,\ldots,N\}$.
Using that $A$ has real rank zero, we find invertible elements $y_1^{(3j)},y_2^{(3j)},y_3^{(3j)}\in A_\sa$ with finite spectra such that
\[
\left\| y_1^{(3j)} - x_1(\tfrac{3j}{3N}) \right\| < \frac{\varepsilon}{4}, \quad
\left\| y_1^{(3j)} - x_2(\tfrac{3j}{3N}) \right\| < \frac{\varepsilon}{4}, \andSep
\left\| y_1^{(3j)} - x_3(\tfrac{3j}{3N}) \right\| < \frac{\varepsilon}{4}.
\]

By perturbing the elements, if necessary, we may assume that the spectra of $y_k^{(3j)}$ and $y_{k'}^{(3j')}$ are disjoint whenever $k\neq k'$ or $j\neq j'$.
Choose $\mu>0$ such that any two distinct points in $\{0\}\cup\bigcup_{k,j}\sigma(y_k^{(3j)})$ have distance at least $2\mu$.
We may assume that $\mu<\tfrac{\varepsilon}{4}$.

For $j\in\{0,\ldots,N\}$ let $f_j,g_j,h_j\colon[0,1]\to[0,1]$ be continuous functions such that:
\begin{enumerate}
\item
$f_j$ takes the value $1$ on $[\tfrac{3j}{3N},\tfrac{3j+2}{3N}]$ for $j=0,\ldots,N-1$;
\item
the collection $(f_j)_{j=0,\ldots,N}$ is a partition of unity subordinate to the family $\{ [0,\tfrac{3}{3N}),(\tfrac{2}{3N},\tfrac{6}{2N}), \ldots,(\tfrac{3N-4}{3N},\tfrac{3N}{3N}), (\tfrac{3N-1}{3N},\tfrac{3N}{3N}] \}$;
\item
$g_j$ takes the value $1$ on $[\tfrac{3j-2}{3N},\tfrac{3j}{2N}]$ for $j=1,\ldots,N$;
\item
the collection $(g_j)_{j=0,\ldots,N}$ is a partition of unity subordinate to the family $\{ [0,\tfrac{1}{3N}),(\tfrac{0}{3N},\tfrac{4}{3N}), \ldots,(\tfrac{3N-6}{3N},\tfrac{3N-2}{3N}), (\tfrac{3N-3}{3N},\tfrac{3N}{3N}] \}$.
\item
$h_0$ takes the value $1$ on $[\tfrac{0}{3N},\tfrac{1}{3N}]$;
$h_N$ takes the value $1$ on $[\tfrac{3N-1}{3N},\tfrac{3N}{3N}]$;
and $h_j$ takes the value $1$ on $[\tfrac{3j-1}{3N},\tfrac{3j+1}{3N}]$ for $j=1,\ldots,N-1$;
\item
the collection $(h_j)_{j=0,\ldots,N}$ is a partition of unity subordinate to the family $\{ [0,\tfrac{2}{3N}),(\tfrac{1}{3N},\tfrac{5}{2N}), \ldots,(\tfrac{3N-5}{3N},\tfrac{3N-1}{3N}), (\tfrac{3N-2}{3N},\tfrac{3N}{3N}] \}$.
\end{enumerate} 
The functions are depicted in the following picture:
\[
\makebox{
\begin{tikzpicture}[xscale=0.5]
\draw [help lines, ->] (-0.1,0) -- (14.2,0);
\draw [help lines, ->] (0,-0.1) -- (0,1.5);

\draw [thick] (0,1) -- (2,1);
\draw [thick] (2,1) -- (3,0);
\draw [thick] (2,0) -- (3,1);
\draw [thick] (3,1) -- (5,1);
\draw [thick] (5,1) -- (6,0);
\fill (7,0.5) ellipse (0.1 and 0.05);
\fill (7.5,0.5) ellipse (0.1 and 0.05);
\fill (8,0.5) ellipse (0.1 and 0.05);
\draw [thick] (9,0) -- (10,1);
\draw [thick] (10,1) -- (12,1);
\draw [thick] (12,1) -- (13,0);
\draw [thick] (12,0) -- (13,1);
\draw [dotted] (13,0) -- (13,1);

\draw [thick] (-0.1,1) -- (0,1);
\node [left] at (0,1) {$1$};
\node [above] at (1,1) {$f_0$};
\node [above] at (4,1) {$f_1$};
\node [above] at (11,1) {$f_{N-1}$};
\node [right] at (13,1.0) {$f_{N}$};

\draw [thick] (0,0) -- (0,-0.1);
\draw [thick] (1,0) -- (1,-0.1);
\draw [thick] (2,0) -- (2,-0.1);
\draw [thick] (3,0) -- (3,-0.1);
\draw [thick] (4,0) -- (4,-0.1);
\draw [thick] (5,0) -- (5,-0.1);
\draw [thick] (6,0) -- (6,-0.1);
\draw [thick] (12,0) -- (12,-0.1);
\draw [thick] (13,0) -- (13,-0.1);
\node [below] at (0,-0.05) {$0$};
\node [below] at (1,-0.05) {$\tfrac{1}{3N}$};
\node [below] at (2,-0.05) {$\tfrac{2}{3N}$};
\node [below] at (3,-0.05) {$\tfrac{3}{3N}$};
\node [below] at (4,-0.05) {$\tfrac{4}{3N}$};
\node [below] at (5,-0.05) {$\tfrac{5}{3N}$};
\node [below] at (6,-0.05) {$\tfrac{6}{3N}$};
\node [below] at (12,-0.05) {$\tfrac{3N-1}{3N}$};
\node [below] at (13,-0.05) {$1$};


\draw [help lines, ->] (-0.1,-2.2) -- (14.2,-2.2);
\draw [help lines, ->] (0,-2.3) -- (0,-0.7);

\draw [thick] (0,-1.2) -- (1,-2.2);
\draw [thick] (0,-2.2) -- (1,-1.2);
\draw [thick] (1,-1.2) -- (3,-1.2);
\draw [thick] (3,-1.2) -- (4,-2.2);
\fill (5,-1.7) ellipse (0.1 and 0.05);
\fill (5.5,-1.7) ellipse (0.1 and 0.05);
\fill (6,-1.7) ellipse (0.1 and 0.05);
\draw [thick] (7,-2.2) -- (8,-1.2);
\draw [thick] (8,-1.2) -- (10,-1.2);
\draw [thick] (10,-1.2) -- (11,-2.2);
\draw [thick] (10,-2.2) -- (11,-1.2);
\draw [thick] (11,-1.2) -- (13,-1.2);
\draw [dotted] (13,-2.2) -- (13,-1.2);

\draw [thick] (-0.1,-1.2) -- (0,-1.2);
\node [left] at (0,-1.2) {$1$};
\node [above] at (0.5,-1.3) {$g_0$};
\node [above] at (2,-1.2) {$g_1$};
\node [above] at (9,-1.2) {$g_{N-1}$};
\node [above] at (12,-1.2) {$g_{N}$};

\draw [thick] (0,-2.2) -- (0,-2.3);
\draw [thick] (1,-2.2) -- (1,-2.3);
\draw [thick] (2,-2.2) -- (2,-2.3);
\draw [thick] (3,-2.2) -- (3,-2.3);
\draw [thick] (4,-2.2) -- (4,-2.3);
\draw [thick] (10,-2.2) -- (10,-2.3);
\draw [thick] (11,-2.2) -- (11,-2.3);
\draw [thick] (12,-2.2) -- (12,-2.3);
\draw [thick] (13,-2.2) -- (13,-2.3);
\node [below] at (0,-2.25) {$0$};
\node [below] at (1,-2.25) {$\tfrac{1}{3N}$};
\node [below] at (2,-2.25) {$\tfrac{2}{3N}$};
\node [below] at (3,-2.25) {$\tfrac{3}{3N}$};
\node [below] at (4,-2.25) {$\tfrac{4}{3N}$};
\node [below] at (10,-2.25) {$\tfrac{3N-3}{3N}$};
\node [below] at (13,-2.25) {$1$};


\draw [help lines, ->] (-0.1,-4.4) -- (14.2,-4.4);
\draw [help lines, ->] (0,-4.5) -- (0,-2.9);

\draw [thick] (0,-3.4) -- (1,-3.4);
\draw [thick] (1,-3.4) -- (2,-4.4);
\draw [thick] (1,-4.4) -- (2,-3.4);
\draw [thick] (2,-3.4) -- (4,-3.4);
\draw [thick] (4,-3.4) -- (5,-4.4);
\fill (6,-3.9) ellipse (0.1 and 0.05);
\fill (6.5,-3.9) ellipse (0.1 and 0.05);
\fill (7,-3.9) ellipse (0.1 and 0.05);
\draw [thick] (8,-4.4) -- (9,-3.4);
\draw [thick] (9,-3.4) -- (11,-3.4);
\draw [thick] (11,-3.4) -- (12,-4.4);
\draw [thick] (11,-4.4) -- (12,-3.4);
\draw [thick] (12,-3.4) -- (13,-3.4);
\draw [dotted] (13,-4.4) -- (13,-3.4);

\draw [thick] (-0.1,-3.4) -- (0,-3.4);
\node [left] at (0,-3.4) {$1$};
\node [above] at (0.5,-3.4) {$h_0$};
\node [above] at (3,-3.4) {$h_1$};
\node [above] at (10,-3.4) {$h_{N-1}$};
\node [above] at (12.5,-3.4) {$h_{N}$};

\draw [thick] (0,-4.4) -- (0,-4.5);
\draw [thick] (1,-4.4) -- (1,-4.5);
\draw [thick] (2,-4.4) -- (2,-4.5);
\draw [thick] (3,-4.4) -- (3,-4.5);
\draw [thick] (4,-4.4) -- (4,-4.5);
\draw [thick] (5,-4.4) -- (5,-4.5);
\draw [thick] (11,-4.4) -- (11,-4.5);
\draw [thick] (12,-4.4) -- (12,-4.5);
\draw [thick] (13,-4.4) -- (13,-4.5);
\node [below] at (0,-4.45) {$0$};
\node [below] at (1,-4.45) {$\tfrac{1}{3N}$};
\node [below] at (2,-4.45) {$\tfrac{2}{3N}$};
\node [below] at (3,-4.45) {$\tfrac{3}{3N}$};
\node [below] at (4,-4.45) {$\tfrac{4}{3N}$};
\node [below] at (5,-4.45) {$\tfrac{5}{3N}$};
\node [below] at (11,-4.45) {$\tfrac{3N-2}{3N}$};
\node [below] at (13,-4.45) {$1$};
\end{tikzpicture}
}
\]

Define $y_1,y_2,y_3\colon[0,1]\to A$ by 
\begin{align*}
y_1(t) &:= \sum_{j=0}^{N} \left( y_1^{(3j)} + t\mu \right) f_{j}(t), \quad
y_2(t) := \sum_{j=0}^{N} \left( y_2^{(3j)} + t\mu \right) g_{j}(t), \\
y_3(t) &:= \sum_{j=0}^{N} \left( y_3^{(3j)} + t\mu \right) h_{j}(t),
\end{align*}
for $t\in[0,1]$.

To verify that $\|y_1-x_1\|<\varepsilon$, we estimate $\|y_1(t)-x_1(t)\|$ for $t\in[0,1]$, similarly as in the proof of \autoref{prp:approxGenFibers}.
For $t=1$, we obtain
\[
\left\| y_1(1) - x_1(1) \right\| 
= \left\|  y_1^{(3j)} + \mu - x_1(1) \right\|
< \frac{\varepsilon}{4} + \frac{\varepsilon}{4} 
\leq \frac{3}{4}\varepsilon.
\]
Given $t\in[0,1)$, let $j\in\{1,\ldots,N\}$ satisfy $t\in[\tfrac{3j-3}{3N},\tfrac{3j}{3N})$.
Using at the second step that $|t-\tfrac{3j}{3N}|<\tfrac{3}{3N}$, we have
\begin{align*}
\left\| y_1^{(3j)} + t\mu - x_1(t) \right\|
&\leq \mu + \left\| y_1^{(3j)} - x_1(\tfrac{3j}{3N}) \right\| 
+ \left\| x_1(\tfrac{3j}{3N}) - x_1(t) \right\|
\leq \frac{3}{4}\varepsilon.
\end{align*}
Analogously, we obtain $\| y_1^{(3j-3)} + t\mu - x_1(t) \|\leq \frac{3}{4}\varepsilon$.
We deduce that
\[
\left\| y_1(t) - x_1(t) \right\|
= \left\| \left( \left(y_1^{(3j-3)} + t\mu\right)f_{j-1}(t) + \left( y_1^{(3j)} + t\mu\right) f_j(t) \right) - x_1(t) \right\|
\leq \frac{3}{4}\varepsilon.
\]
Hence, $\|y_1-x_1\|=\sup_{t\in[0,1]}\|y_1(t)-x_1(t)\|\leq\tfrac{3}{4}\varepsilon<\varepsilon$.
Analogously, one shows that $\|y_k-x_k\|<\varepsilon$ for $k=2,3$.

It remains to verify that $C^*(y_1,y_2,y_3)$ separates the points in $[0,1]$ in the sense of \autoref{prp:genCenter}(b).
Let $s,t\in[0,1]$ be distinct.
Choose $k\in\{1,2,3\}$ such that both $s$ and $t$ are contained in the set
\[
\bigcup_{j\in\ZZ} \left[ \frac{3j+k-1}{3N},\frac{3j+k+1}{3N} \right]
\]
Let us consider the case $k=1$.
Choose $j,j'\in\{0,\ldots,N\}$ such that $s\in[\tfrac{3j}{3N},\frac{3j+2}{3N}]$ and $t\in[\tfrac{3j'}{3N},\frac{3j'+2}{3N}]$.
Then
\[
y_1(s) = y_1^{(3j)} + s\mu , \andSep
y_1(t) = y_1^{(3j')} + t\mu.
\]
Since $s\neq t$, and by choice of $\mu$, it follows that $y_1(s)$ and $y_1(t)$ have finite spectra that are disjoint and do not contain $0$.
Hence, there exists a continuous function $f\colon\RR\to[0,1]$ such that $f(y_1(s))=0$ and $f(y_1(t))=1$.
Then the element $b:=f(y_1)\in B$ satisfies $b(s)=0$ and $b(t)=1$.

The cases $k=2,3$ are analogous, using $y_2$ and $y_3$.
\end{proof}

\begin{prp}
\label{prp:tensI}
Let $A$ be a unital, separable \ca{} of real rank zero and generator rank at most one.
Set $B:=A\otimes C([0,1])$.
The we have $\gr(B)\leq 6$.
\end{prp}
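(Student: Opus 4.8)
The plan is to combine the two preceding lemmas with the permanence properties of the generator rank. The key structural observation is that $B = A \otimes C([0,1])$ carries a natural $C([0,1])$-algebra structure, and by \autoref{prp:genCenter}, a sub-\ca{} of $B$ that both generates $A$ in every fiber and separates the points of $[0,1]$ must contain all of $C([0,1])\cdot 1_A$, hence must be all of $B$. So the strategy is to show that for every tuple of self-adjoint elements and every $\varepsilon>0$, we can perturb the tuple by less than $\varepsilon$ to obtain generators of $B$; the number of elements required will give the bound on $\grPre(B)$, and since $A$ has real rank zero (hence so does $B=A\otimes C([0,1])$, as real rank zero is preserved under tensoring with a commutative real-rank-zero algebra), we have $\gr(B)=\grPre(B)$.

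\textbf{Combining the lemmas.} \autoref{prp:approxGenFibers} produces, from any $4$ self-adjoint elements, a nearby $4$-tuple $y_1,\ldots,y_4$ that generates $A$ in every fiber. \autoref{prp:approxSepInterval} produces, from any $3$ self-adjoint elements, a nearby $3$-tuple $z_1,z_2,z_3$ such that $C^*(z_1,z_2,z_3)$ separates the points of $[0,1]$. Given an arbitrary $7$-tuple $a_0,\ldots,a_6\in B_\sa$ and $\varepsilon>0$, I would apply \autoref{prp:approxGenFibers} to $(a_0,a_1,a_2,a_3)$ to get $y_0,\ldots,y_3$ within $\varepsilon$ that generate each fiber, and apply \autoref{prp:approxSepInterval} to $(a_4,a_5,a_6)$ to get $y_4,y_5,y_6$ within $\varepsilon$ such that $C^*(y_4,y_5,y_6)$ separates the points of $[0,1]$. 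Then $C:=C^*(y_0,\ldots,y_6)$ generates $A$ in each fiber (since the first four already do) and separates the points of $[0,1]$ (since the last three already do). By \autoref{prp:genCenter}, $C\supseteq C([0,1])\cdot 1_B$; combined with the fact that $C$ surjects onto every fiber $A$, a standard argument shows $C=B$. This exhibits $b_0,\ldots,b_6$ within $\varepsilon$ of the $a_j$ that generate $B$, so $\grPre(B)\leq 6$ by \autoref{prp:grCharSep}, and hence $\gr(B)=\grPre(B)\leq 6$.

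\textbf{The main obstacle} is the final deduction that containing $C([0,1])$ together with surjecting onto each fiber forces $C=B$. This is where the $C([0,1])$-algebra structure does the essential work: once $C([0,1])\cdot 1_B\subseteq C$, the sub-\ca{} $C$ is itself a $C([0,1])$-algebra, and its fiber at each $t$ is exactly $C^*(y_0(t),\ldots,y_6(t))=A$ by the conclusion of \autoref{prp:approxGenFibers}. A $C([0,1])$-subalgebra that is fiberwise everything must be everything — this follows because for any $b\in B$ and any $\varepsilon>0$ one covers $[0,1]$ by finitely many intervals on which $b$ is nearly constant, approximates $b$ on each by an element of $C$ agreeing in the relevant fiber, and patches these together using a partition of unity drawn from $C([0,1])\subseteq C$. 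I would either invoke this as the standard Stone--Weierstrass principle for $C(X)$-algebras or spell out the partition-of-unity patching explicitly, taking care that all the interpolating functions lie in $C([0,1])\subseteq C$ so that the patched element genuinely belongs to $C$.
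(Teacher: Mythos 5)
Your proposal is correct and follows essentially the same route as the paper: the paper likewise applies \autoref{prp:approxGenFibers} to four of the seven elements and \autoref{prp:approxSepInterval} to the remaining three, and then deduces $C^*(y_1,\ldots,y_7)=B$ from fiberwise generation together with point-separation, citing \cite[Lemma~3.2]{ThiWin14GenZStableCa} for precisely the Stone--Weierstra{\ss}/partition-of-unity patching you spell out. One correction: your parenthetical claim that $B=A\otimes C([0,1])$ has real rank zero is false --- $C([0,1])$ has real rank one, so already for $A=\CC$ one gets $\rr(B)=1$ --- but this does not damage the argument, since $B$ is unital and hence $\gr(B)=\grPre(B)$ holds by definition, which is exactly how the paper justifies the final appeal to \autoref{prp:grCharSep}.
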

\begin{proof}
We show that every $7$-tuple in $B_\sa$ can be approximated by a tuple that generates $B$.
Since $B$ is separable and unital, this verifies $\gr(B)\leq 6$;
see \autoref{prp:grCharSep}.

Let $x_1,\ldots,x_7\in B_\sa$ and $\varepsilon>0$.
Applying \autoref{prp:approxGenFibers} for $x_1,\ldots,x_4$, we obtain $y_1,\ldots,y_4\in B_\sa$ such that 
\[
\| y_j - x_j \|<\varepsilon \ \text{ for } j=1,\ldots,4, \andSep
A=C^*(y_1(t),\ldots,y_4(t)) \ \text{ for all } t\in[0,1].
\]
Applying \autoref{prp:approxSepInterval} for $x_5,x_6,x_7$, we obtain $y_5,y_6,y_7\in B_\sa$ such that $\|y_k-x_k\|<\varepsilon$ for $k=5,6,7$, and such that $C^*(y_5,y_6,y_7)$ separates the points in $[0,1]$ in the sense of \autoref{prp:genCenter}(b).

Set $D:=C^*(y_1,\ldots,y_7)\subseteq B$.
Then $D$ exhausts each fiber of $B$, and moreover separates the points in $[0,1]$.
Hence, $D=B$ by \cite[Lemma~3.2]{ThiWin14GenZStableCa}, which shows that $\{y_1,\ldots,y_7\}$ generates $B$, as desired.
\end{proof}

\begin{rmk}
The proof of \autoref{prp:tensI} can be generalized to show the following:
If $A$ is a unital, separable \ca{} of real rank zero, then $B:=A\otimes C([0,1])$ satisfies $\gr(B)\leq 2\gr(A)+4$.
\end{rmk}

\section{Establishing finite generator rank}
\label{sec:finiteGr}

In this section we prove that unital, separable, $\mathcal{Z}$-stable \ca{s} of real rank zero have finite generator rank;
see \autoref{prp:gr8}.
In the next section, we will successively reduce the upper bound for the generator rank of such algebras down to one.

We start with a lemma that simplifies the computation of the generator rank for \ca{s} that absorb a strongly self-absorbing \ca.
For the definition and basic results of strongly self-absorbing \ca{s} we refer to \cite{TomWin07ssa}.
Given a strongly self-absorbing \ca{} $D$, a \ca{} $A$ is said to be \emph{$D$-stable} if $A\cong A\otimes D$.
Since every strongly self-absorbing \ca{} is nuclear, we do not need to specify the tensor product.
Typical examples of strongly self-absorbing \ca{s} are UHF-algebras of infinite type, the Jiang-Su algebra $\mathcal{Z}$, and the Cuntz algebras $\mathcal{O}_\infty$ and $\mathcal{O}_2$.

\begin{lma}
\label{prp:genDabsorbing}
Let $D$ be a strongly self-absorbing \ca{}, let $A$ be a separable, $D$-stable \ca, and $n\in\NN$.
Then the following are equivalent:
\begin{enumerate}
\item
We have $\grPre(A)\leq n$;
\item
For every $x_0,\ldots,x_n\in A_\sa$ and $\varepsilon>0$, there exist $y_0,\ldots,y_n\in (A\otimes D)_\sa$ such that
\[
\quad \quad 
\| y_j - (x_j\otimes 1) \|<\varepsilon \ \text{ for } j=0,\ldots,n, \andSep
A\otimes D = C^*(y_0,\ldots,y_n);
\]
\item
For every $x_0,\ldots,x_n\in A_\sa$, $\varepsilon>0$, and $z\in A$, there exist $y_0,\ldots,y_n\in (A\otimes D)_\sa$ such that
\[
\quad \quad 
\| y_j - (x_j\otimes 1) \|<\varepsilon\ \text{ for } j=0,\ldots,n, \andSep 
z\otimes 1\in_\varepsilon C^*(y_0,\ldots,y_k).
\]
\end{enumerate}
\end{lma}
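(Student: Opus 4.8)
The plan is to prove the cyclic chain of implications $(1)\Rightarrow(2)\Rightarrow(3)\Rightarrow(1)$, with the last one being the substantial step.

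For $(1)\Rightarrow(2)$, I would use that $A$ is $D$-stable, so the isomorphism $A\cong A\otimes D$ gives $\grPre(A\otimes D)=\grPre(A)\leq n$. Since $D$ is separable (being strongly self-absorbing), $A\otimes D$ is separable, and so \autoref{prp:grCharSep} applies to $A\otimes D$: for every tuple $w_0,\ldots,w_n\in(A\otimes D)_\sa$ and every $\varepsilon>0$ there exist $y_0,\ldots,y_n\in(A\otimes D)_\sa$ with $\|y_j-w_j\|<\varepsilon$ and $A\otimes D=C^*(y_0,\ldots,y_n)$. Taking $w_j=x_j\otimes 1$ yields exactly~(2). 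The implication $(2)\Rightarrow(3)$ is immediate: if $A\otimes D=C^*(y_0,\ldots,y_n)$, then in particular $z\otimes 1\in C^*(y_0,\ldots,y_n)$, so the same tuple witnesses~(3).

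For $(3)\Rightarrow(1)$, the key tool is the defining feature of strong self-absorption: since $A$ is separable and $D$-stable, the first-factor embedding $\iota\colon A\to A\otimes D$, $a\mapsto a\otimes 1$, is approximately unitarily equivalent to an isomorphism $\phi\colon A\to A\otimes D$ (see \cite{TomWin07ssa}); that is, for every finite $F\subseteq A$ and $\eta>0$ there is a unitary $u\in\widetilde{A\otimes D}$ with $\|u\phi(a)u^*-a\otimes 1\|<\eta$ for all $a\in F$. Given $a_0,\ldots,a_n\in A_\sa$, $\varepsilon>0$ and $c\in A$, I would first apply~(3) with $x_j=a_j$, $z=c$ and a small $\delta>0$ to obtain $y_0,\ldots,y_n\in(A\otimes D)_\sa$ with $\|y_j-(a_j\otimes 1)\|<\delta$ and $c\otimes 1\in_\delta C^*(y_0,\ldots,y_n)$. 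Choosing $u$ as above for $F=\{a_0,\ldots,a_n,c\}$ and a small $\eta>0$, I set $b_j:=\phi^{-1}(u^*y_ju)\in A_\sa$; this makes sense and lands in $A$ because $A\otimes D$ is an ideal in $\widetilde{A\otimes D}$, so $u^*y_ju\in A\otimes D$. Since $\phi^{-1}$ is isometric, the estimate $\|b_j-a_j\|\leq\|u^*y_ju-u^*(a_j\otimes 1)u\|+\|u^*(a_j\otimes 1)u-\phi(a_j)\|<\delta+\eta$ holds, and $C^*(b_0,\ldots,b_n)=\phi^{-1}\!\big(u^*C^*(y_0,\ldots,y_n)u\big)$ contains $\phi^{-1}(u^*(c\otimes 1)u)$ up to $\delta$, which is within $\eta$ of $c=\phi^{-1}(\phi(c))$. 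Choosing $\delta+\eta<\varepsilon$ then gives $\|b_j-a_j\|<\varepsilon$ and $c\in_\varepsilon C^*(b_0,\ldots,b_n)$, which is precisely $\grPre(A)\leq n$.

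I expect the main obstacle to be the bookkeeping in $(3)\Rightarrow(1)$: one must transport the generators produced \emph{inside} $A\otimes D$ back into $A$ through $\phi^{-1}$ while keeping control of every approximation, and the only mechanism enabling this is the approximate unitary equivalence $\phi\approx_u\iota$. The conceptual content lies entirely in that statement; once it is available, the remaining estimates are routine triangle inequalities, and the facts that conjugation by a unitary preserves the generated sub-\ca{} and leaves $A\otimes D$ invariant as an ideal are what make the transport legitimate.
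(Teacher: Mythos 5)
Your proposal is correct and takes essentially the same route as the paper: (1)$\Rightarrow$(2) via \autoref{prp:grCharSep} together with $\grPre(A)=\grPre(A\otimes D)$, a trivial (2)$\Rightarrow$(3), and (3)$\Rightarrow$(1) using the Toms--Winter theorem that the first-factor embedding $\iota$ is approximately unitarily equivalent to an isomorphism $\phi\colon A\to A\otimes D$, followed by a unitary-conjugation transport with routine triangle-inequality bookkeeping. The only cosmetic difference is the direction of transport: the paper verifies $\grPre(A\otimes D)\leq n$ by pulling the test data back through $\Phi^{-1}$ and conjugating the produced generators inside $A\otimes D$, whereas you verify $\grPre(A)\leq n$ directly by moving the generators into $A$ via $\phi^{-1}$ composed with conjugation by $u^*$ --- the same estimates in mirror image.
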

\begin{proof}
Since $A\cong A\otimes D$, we have $\grPre(A)=\grPre(A\otimes D)$.
Using that $A$ is separable, it follows from \autoref{prp:grCharSep} that~(1) implies~(2).
It is clear that~(2) implies~(3).
Assuming~(3), let us verify~(1).
To show $\grPre(A\otimes D)\leq n$, let $a_0,\ldots,a_n\in (A\otimes D)_\sa$, $\varepsilon>0$ and $c\in A\otimes D$.
We need to find $b_0,\ldots,b_n\in (A\otimes D)_\sa$ such that
\[
\| b_j - a_j \| < \varepsilon \text{ for } j=0,\ldots,n, \andSep
c\in_\varepsilon C^*(b_0,\ldots,b_n).
\]

Since $D$ is strongly self-absorbing and $A$ is separable and $D$-stable, there exists a ${}^*$-isomorphism $\Phi\colon A\to A\otimes D$ that is approximately unitarily equivalent to the inclusion $\iota\colon A\to A\otimes D$ given by $\iota(a)=a\otimes 1$, that is, there exists a sequence $(u_m)_m$ of unitaries in $A\otimes D$ such that $\lim_{m\to\infty} u_m\Phi(a)u_m^* = \iota(a)$ for all $a\in A$;
see \cite[Theorem~2.2]{TomWin07ssa}.
Set
\[
x_j := \Phi^{-1}(a_j)\ \text{ for } j=0,\ldots,n, \andSep
z:=\Phi^{-1}(c).
\]
Using that $u_m^*(x_j\otimes 1)u_m\to a_j$ for $j=0,\ldots,n$, and $u_m^*(z\otimes 1)u_m\to c$, we can choose $m$ such that
\[
\left\| u_m^*(x_j\otimes 1)u_m-a_j \right\| < \frac{\varepsilon}{2}\ \text{ for } j=0,\ldots,n, \andSep
\left\| u_m^*(z\otimes 1)u_m-c \right\| < \frac{\varepsilon}{2}.
\]
By assumption~(3), we obtain $y_0,\ldots,y_n\in (A\otimes D)_\sa$ such that
\[
\| y_j - (x_j\otimes 1) \| < \frac{\varepsilon}{2}\ \text{ for } j=0,\ldots,n, \andSep
z\otimes 1 \in_{\varepsilon/2} C^*(y_0,\ldots,y_{n}).
\]
For $j=0,\ldots,n$, we have
\[
\|u_m^* y_j u_m - a_j\|
\leq \|u_m^* y_j u_m - u_m^* (x_j\otimes 1) u_m\| + \| u_m^* (x_j\otimes 1) u_m - a_j \| < \varepsilon.
\]
Moreover, from $z\otimes 1 \in_{\varepsilon/2} C^*(y_0,\ldots,y_n)$ it follows that
\[
u_m^*(z\otimes 1)u_m \in_{\varepsilon/2} C^*(u_m^* y_0 u_m,\ldots, u_m^* y_n u_m),
\]
and thus
\[
c\in_\varepsilon C^*(u_m^* y_0 u_m,\ldots, u_m^* y_n u_m),
\]
which shows that $u_m^*y_0u_m,\ldots,u_m^*y_nu_m$ have the desired properties.
\end{proof}

\begin{prp}
\label{prp:UHFabs}
Let $A$ be a separable, unital \ca{} of real rank zero that tensorially absorbs a UHF-algebra of infinite type.
Then $\gr(A)\leq 1$.
\end{prp}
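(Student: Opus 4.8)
The plan is to apply \autoref{prp:genDabsorbing} with the strongly self-absorbing algebra $D=U$, where $U$ is the given UHF-algebra of infinite type, and $n=1$. Since $A$ has real rank zero we have $\gr(A)=\grPre(A)$, so it suffices to verify condition~(2) of that lemma: given $x_0,x_1\in A_\sa$ and $\varepsilon>0$, one must produce $y_0,y_1\in (A\otimes U)_\sa$ with $\|y_j-(x_j\otimes 1)\|<\varepsilon$ and $A\otimes U=C^*(y_0,y_1)$. The guiding idea is that, although the perturbations $y_j-(x_j\otimes 1)$ are constrained to have norm $<\varepsilon$, they may nonetheless carry a very rich spectral and matricial structure, precisely because $U$ is infinite-dimensional with an abundance of projections and matrix units; this is what allows two self-adjoint elements to generate the whole tensor product.

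First I would use real rank zero of $A$ to replace $x_0$ by a self-adjoint element of finite spectrum $\sum_i\lambda_i p_i$, with distinct $\lambda_i$ and orthogonal projections $p_i$ summing to $1$, within $\varepsilon/2$. I then set $y_0=\sum_i\big(\lambda_i(p_i\otimes 1)+p_i\otimes h\big)$, where $h\in U_\sa$ is a fixed finite-spectrum element of norm $<\varepsilon/2$, chosen small enough relative to the gaps between the $\lambda_i$ that the spectral bands $\lambda_i+\sigma(h)$ are pairwise disjoint. Functional calculus applied to $y_0$ then recovers each $p_i\otimes 1$ together with a diagonal system of projections $p_i\otimes q_j$ coming from the eigenprojections $q_j$ of $h$; choosing $h$ appropriately, the $q_j$ form the diagonal of a large matrix subalgebra $M_d\subseteq U$. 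In particular $C^*(y_0)$ contains the diagonal projections $1\otimes q_j=\sum_i p_i\otimes q_j$.

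Next I would build $y_1=x_1\otimes 1+\delta$ with $\delta\in(A\otimes U)_\sa$ of norm $<\varepsilon$ whose only nonzero blocks $(1\otimes q_j)\delta(1\otimes q_k)$ are off-diagonal (that is, $j\neq k$), consisting of a cyclic shift among the $q_j$ with small scalar weights in some positions and small multiples of a countable self-adjoint generating set of $A$ in other positions. Compressing $y_1$ by the projections $1\otimes q_j\in C^*(y_0)$ then separates the diagonal part $x_1\otimes 1$ from the blocks of $\delta$: summing the diagonal compressions recovers $x_1\otimes 1$, while the off-diagonal compressions deliver the matrix units $1\otimes e_{jk}$ (from the shift, which together with the $q_j$ generate $1\otimes M_d$) as well as the encoded generators of $A$ in the form $a\otimes e_{jk}$, hence $a\otimes 1$. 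Thus $C^*(y_0,y_1)$ contains $A\otimes 1$ and $1\otimes M_d$; by arranging the construction over a nested exhausting sequence $M_{d_1}\subseteq M_{d_2}\subseteq\cdots$ of matrix subalgebras of $U$, encoding the $n$-th generator of $A$ at a deep enough level with a weight tending to $0$, one forces $C^*(y_0,y_1)$ to contain $A\otimes M_{d_k}$ for every $k$, whence it equals $A\otimes U$.

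The main obstacle is this last, limiting step: the perturbations must be summably small so that $y_0,y_1$ stay within $\varepsilon$ of $x_0\otimes 1,x_1\otimes 1$, yet every generator of $A$ and every matrix unit of $U$ must remain extractable from $C^*(y_0,y_1)$ in the norm closure. The tension is that deeper generators of $A$ are encoded with ever smaller weights, so one must check that the spectral separations produced by $y_0$ and the compression estimates used to recover the individual blocks stay uniformly controlled despite the accumulating perturbations. Real rank zero is exactly what anchors the construction: it supplies the projection systems in $A$ that make the diagonal of $y_0$, and hence the clean functional-calculus extraction of the corner structure, possible.
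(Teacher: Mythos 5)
Your opening moves coincide with the paper's: reduce via \autoref{prp:genDabsorbing}, use real rank zero to replace $x_0$ by a finite-spectrum element, perturb so that functional calculus on $y_0$ yields diagonal projections, and hide data for $A$ and for matrix units in small off-diagonal blocks of $y_1$ in the style of Olsen--Zame. But you chose to verify condition~(2) of the lemma --- full generation $A\otimes U=C^*(y_0,y_1)$ --- and that choice is exactly what creates your self-declared ``main obstacle'': an infinite, multi-level encoding over nested algebras $M_{d_1}\subseteq M_{d_2}\subseteq\cdots$ with weights tending to $0$, which the proposal does not actually close. Concretely: (i) your $h$ is specified to have \emph{finite} spectrum, so $C^*(y_0)$ delivers the diagonal projections of a single $M_d$ only; to get exact diagonal projections at every level $M_{d_k}$ you would need $h$ to generate the diagonal masa (Cantor-type spectrum), and then all spectral-separation arguments must be redone with gaps shrinking to $0$; (ii) block supports at different levels overlap, since a level-$k$ matrix unit decomposes into level-$(k{+}1)$ ones, so a compression $(1\otimes q_j)\,y_1\,(1\otimes q_k)$ picks up contributions from several levels at once, and you have not arranged disjoint supports or shown the cross-terms can be untangled; (iii) recovering $x_1\otimes 1$ by ``summing the diagonal compressions'' fails in norm, since $\sum_j x_1\otimes q_j$ converges only strictly when the family $(q_j)_j$ is infinite (this step is dispensable, but it signals that the deferred convergence bookkeeping is genuinely delicate); moreover, the Olsen--Zame inversion that turns $a\otimes e_{1k}$ into $1\otimes e_{1k}$ needs the encoded elements positive and invertible, which your ``self-adjoint generating set'' does not provide. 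As written, the hard part of the proof is precisely the part left unchecked.

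The irony is that the lemma you invoke already contains the escape hatch: condition~(3) of \autoref{prp:genDabsorbing} is equivalent to~(2) but asks only that a \emph{single} element $z\otimes 1$ be captured up to $\varepsilon$ in $C^*(y_0,y_1)$. This is what the paper verifies, and it collapses the whole limiting apparatus to one finite stage: fix a unital copy $M_d\subseteq D$ with $d\geq 5$; write $z\in C^*(c_2,\ldots,c_d)$ with $c_j$ positive, invertible and $\|c_j\|<\varepsilon/d$ (scale the four positive invertible summands of $z$); take $x=a\otimes 1+\sum_{j=1}^d \tfrac{j}{d}\mu\,e_{jj}$ (your staircase, at a single level) and $y=b\otimes 1+\sum_{j=2}^d c_j\otimes(e_{1j}+e_{j1})$. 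Then the diagonal units come from functional calculus on $x$, the remaining units from the Olsen--Zame trick (positivity and invertibility of the $c_j$ being arranged from the start), and $z\otimes 1\in C^*(x,y)$ exactly --- no limits, no accumulating perturbations, no uniform-control problem. To salvage your route you would have to carry out the infinite encoding in full, as in the single-generation arguments of \cite{ThiWin14GenZStableCa}; the far simpler repair is to swap condition~(2) for condition~(3).
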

\begin{proof}
Let $D$ be UHF-algebra of infinite type such that $A$ is $D$-stable.
Since $A$ is unital, we have $\gr(A)=\grPre(A)$.
To verify condition~(3) of \autoref{prp:genDabsorbing}, let $a,b\in A_\sa$, $\varepsilon>0$ and $c\in A$.
We need to find $x,y\in (A\otimes D)_\sa$ such that
\[
\| x - (a\otimes 1) \| < \varepsilon, \quad
\| y - (b\otimes 1) \| < \varepsilon, \andSep
c\otimes 1 \in_\varepsilon C^*(x,y).
\]

Since $A$ has real rank zero, we may assume that $a$ is invertible and that its spectrum $\sigma(a)$ is finite, so that there exist $\lambda_1,\ldots,\lambda_n\in\RR\setminus\{0\}$ and pairwise orthogonal projections $p_1,\ldots,p_n\in A$ that sum to $1_A$ such that $a=\sum_{j=1}^n \lambda_j p_j$.
Choose $\mu>0$ such that $\mu$ is strictly smaller than the distance between any two values in $\sigma(a)\cup\{0\}$.
We may assume that $\mu<\varepsilon$.

Choose $d\geq 5$ such that $D$ admits a unital embedding $M_d\subseteq D$.
Using that $c$ can be written as a linear combination of four positive, invertible elements, we can choose positive, invertible elements $c_2,c_3\ldots,c_d\in A$ such that
\[
\|c_j\| < \frac{\varepsilon}{d}, \ \text{ for } j=2,\ldots,d, \andSep
c \in C^*(c_2,\ldots,c_d).
\]
Let $(e_{j,k})_{j,k=1,\ldots,d}$ be matrix units for $M_d$.
We define $x,y\in A\otimes M_{d}$ as
\begin{align*}
x := a\otimes 1 + \sum_{j=1}^d \frac{j}{d}\mu e_{jj}, \andSep
y := b\otimes 1 + \sum_{j=2}^d c_j\otimes \left( e_{1j} + e_{j1} \right) .
\end{align*}

As matrices, these elements look as follows:
\[
x=\left(
\begin{array}{cccccc}
a + \frac{1}{d}\mu & 0 & \cdots & 0 \\
0 & a + \frac{2}{d}\mu & & \vdots \\
\vdots & & \ddots & 0 \\
0 & \cdots & 0 & a + \mu \\
\end{array}\right), \
y=\left(
\begin{array}{cccccc}
b & c_2 & c_3 & \cdots & c_d \\
c_2 & b & 0 & \cdots & 0 \\
c_3 & 0 & b & & \vdots \\
\vdots & \vdots & & \ddots & 0 \\
c_d & 0 & \cdots & 0 & b \\
\end{array}\right), \
\]

Then 
\[
\| x - (a\otimes 1) \|
= \left\| \sum_{j=1}^d \frac{j}{d}\mu e_{jj} \right\| 
= \mu < \varepsilon,
\]
and
\begin{align*}
\| y - (b\otimes 1) \|
= \left\| \sum_{j=2}^d c_j\otimes \left( e_{1j} + e_{j1} \right) \right\| 
\leq \sum_{j=2}^d \|c_j\| < \varepsilon.
\end{align*}

Set $B:=C^*(x,y)\subseteq A\otimes M_{d}\subseteq A\otimes D$.
We will verify that $z\otimes 1 \in B$.
The $j$-th element on the diagonal of $x$ is $a+\tfrac{j}{d}\mu$, whose spectrum is
\[
\sigma(a+\frac{j}{d}\mu) 
= \left\{ \lambda_1+\frac{j}{d}\mu,\ldots,\lambda_n+\frac{j}{d}\mu \right\}.
\]

Given distinct $j,k\in\{1,\ldots,d\}$, it follows from the choice of $\mu$ that the spectra of $a+\tfrac{j}{d}\mu$ and $a+\tfrac{k}{d}\mu$ are finite disjoint sets not containing $0$.
For $j\in\{1,\ldots,d\}$, let $f_j\colon\RR\to[0,1]$ be a continuous functions that takes the value $1$ on $\sigma(a+\tfrac{j}{d}\mu)$, and that takes the value $0$ on $\{0\}\cup\bigcup_{k\neq j}\sigma(a+\frac{k}{d}\mu)$.
Then
\[
1\otimes e_{jj} = f_j(x) \in B \subseteq A\otimes M_d.
\]
Thus, $B$ contains the diagonal matrix units of $M_d$.

To show that $B$ also contains the other matrix units, we follow ideas of Olsen and Zame from \cite{OlsZam76CaSingleGen}.
Given $k\in\{2,\ldots,d\}$, we have 
\[
c_k\otimes e_{1k} = (1\otimes e_{11}) y (1\otimes e_{kk}) \in B.
\]
Then
\[
c_k^2 \otimes e_{11} = (c_j\otimes e_{1k})(c_j\otimes e_{1k})^* \in B.
\]

Since $c_k$ is positive and invertible, we have $c_k^{-1}\in C^*(c_k^2)\subseteq A$, and it follows that $c_k^{-1}\otimes e_{11}\in B$.
Hence
\begin{align*}
1\otimes e_{1k} =
(c_k^{-1}\otimes e_{11})(c_k\otimes e_{1k}) \in B.
\end{align*}

It follows that $1\otimes M_d\subseteq B$.
For each $k\in\{2,\ldots,d\}$, we deduce that
\[
c_k\otimes 1
= \sum_{j=1}^d (1\otimes e_{j1})(c_k\otimes e_{1k})(1\otimes e_{kj}) \in B.
\]
Since $c\in C^*(c_2,\ldots,c_d)$, we get $c\otimes 1\in B\subseteq A\otimes D$, as desired.
\end{proof}

We use $M_{2^\infty}$ to denote the UHF-algebra of type $2^\infty$, and similarly for $M_{3^\infty}$.

\begin{prp}
\label{prp:gr8}
Let $A$ be a unital, separable \ca{} of real rank zero.
Then $\gr(A\otimes\mathcal{Z})\leq 8$.
\end{prp}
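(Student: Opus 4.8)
The plan is to realize $A\otimes\mathcal{Z}$ as an inductive limit of the building blocks $A\otimes Z_{2^\infty,3^\infty}$ and to bound the generator rank of each block by the extension estimate in \autoref{prp:permanence}. Here $Z_{2^\infty,3^\infty}$ denotes the generalized dimension-drop algebra
\[
Z_{2^\infty,3^\infty} := \big\{ f\in C([0,1],M_{2^\infty}\otimes M_{3^\infty}) : f(0)\in M_{2^\infty}\otimes 1, \ f(1)\in 1\otimes M_{3^\infty} \big\},
\]
and I would use that $\mathcal{Z}\cong\varinjlim_k Z_{2^\infty,3^\infty}$ with unital connecting maps (see \cite{RorWin10ZRevisited}). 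Since $M_{2^\infty}\otimes M_{3^\infty}\cong M_{6^\infty}$ and tensoring with $A$ preserves inductive limits, we get $A\otimes\mathcal{Z}\cong\varinjlim_k(A\otimes Z_{2^\infty,3^\infty})$, so the inductive-limit estimate of \autoref{prp:permanence} reduces the problem to showing $\gr(A\otimes Z_{2^\infty,3^\infty})\leq 8$.

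For this I would use the short exact sequence obtained from evaluation at the endpoints,
\[
0 \to A\otimes C_0((0,1),M_{6^\infty}) \to A\otimes Z_{2^\infty,3^\infty} \to (A\otimes M_{2^\infty})\oplus(A\otimes M_{3^\infty}) \to 0,
\]
and bound the ideal and the quotient separately, combining them through the estimate $\gr(A\otimes Z_{2^\infty,3^\infty})\leq\gr(\text{ideal})+\gr(\text{quotient})+1$ from \autoref{prp:permanence}. Throughout I rely on the standard fact that tensoring a real-rank-zero \ca{} with a UHF-algebra again yields a \ca{} of real rank zero: indeed $A\otimes M_{p^\infty}=\varinjlim_k M_{p^k}(A)$, each matrix amplification $M_{p^k}(A)$ has real rank zero, and real rank zero passes to inductive limits (a self-adjoint element invertible in a unitization of some building block stays invertible in the unitization of the limit).

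The quotient is handled quickly: each summand $A\otimes M_{2^\infty}$ and $A\otimes M_{3^\infty}$ is unital, separable, of real rank zero, and absorbs a UHF-algebra of infinite type, so \autoref{prp:UHFabs} gives generator rank at most one, and since both have real rank zero, \autoref{prp:gr_sum_rr0} yields $\gr\big((A\otimes M_{2^\infty})\oplus(A\otimes M_{3^\infty})\big)=1$. For the ideal I would set $B_0:=A\otimes M_{6^\infty}$, which is unital, separable, of real rank zero, and satisfies $\gr(B_0)\leq 1$ by \autoref{prp:UHFabs}. Now $A\otimes C_0((0,1),M_{6^\infty})$ is a closed two-sided ideal of $A\otimes C([0,1],M_{6^\infty})\cong B_0\otimes C([0,1])$, so the ideal estimate of \autoref{prp:permanence} together with \autoref{prp:tensI} applied to $B_0$ gives
\[
\gr\big(A\otimes C_0((0,1),M_{6^\infty})\big) \leq \gr\big(B_0\otimes C([0,1])\big) \leq 6.
\]
Feeding these two bounds into the extension estimate yields $\gr(A\otimes Z_{2^\infty,3^\infty})\leq 6+1+1=8$, and the inductive-limit estimate then gives $\gr(A\otimes\mathcal{Z})\leq 8$.

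The genuinely substantial content has already been isolated in \autoref{prp:UHFabs} and \autoref{prp:tensI}, so the remaining obstacle here is organizational rather than computational: one must correctly identify the endpoint fibers of $Z_{2^\infty,3^\infty}$, verify exactness of the endpoint-evaluation sequence after tensoring with $A$ (unproblematic since all algebras involved are nuclear), and check the real-rank-zero hypotheses needed to invoke \autoref{prp:gr_sum_rr0} and \autoref{prp:tensI}. The one point requiring care is the bookkeeping that makes the bounds $6$, $1$, $1$ combine to exactly $8$: using the sharp direct-sum formula of \autoref{prp:gr_sum_rr0} for the quotient, rather than the generic extension bound (which would only give $3$, hence a total of $10$), is what keeps the final estimate at $8$.
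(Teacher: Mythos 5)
Your proposal is correct and takes essentially the same route as the paper's proof: the same inductive-limit reduction to $A\otimes Z_{2^\infty,3^\infty}$, the same endpoint-evaluation extension, and the same combination of \autoref{prp:UHFabs}, \autoref{prp:tensI}, \autoref{prp:gr_sum_rr0} and the permanence properties of \autoref{prp:permanence} to arrive at $6+1+1=8$. Your side remarks (nuclearity for exactness of the tensored sequence, real rank zero of $A\otimes M_{p^\infty}$ via matrix amplifications and inductive limits, and using the sharp direct-sum formula rather than the extension bound for the quotient) all match the paper's bookkeeping.
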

\begin{proof}
Let $Z_{2^\infty,3^\infty}$ be the generalized dimension-drop algebra given by
\[
Z_{2^\infty,3^\infty} = \big\{ f\in C([0,1],M_{2^\infty}\otimes M_{3^\infty}) : f(0)\in M_{2^\infty}\otimes 1, f(1)\in 1\otimes M_{3^\infty} \big\}.
\]
By \cite[Theorem~3.4]{RorWin10ZRevisited}, $\mathcal{Z}$ is an inductive limit of a sequence of \ca{s} each isomorphic to $Z_{2^\infty,3^\infty}$.
Hence, $A\otimes\mathcal{Z}$ is isomorphic to an inductive limit of \ca{s} isomorphic to $A\otimes Z_{2^\infty,3^\infty}$.
Using that the generator rank behaves well with respect to inductive limits (\autoref{prp:permanence}), we get
\[
\gr(A\otimes\mathcal{Z})
\leq \liminf_{n\to\infty} \gr(A\otimes Z_{2^\infty,3^\infty})
= \gr(A\otimes Z_{2^\infty,3^\infty}).
\]

It thus suffices to verify $\gr(A\otimes Z_{2^\infty,3^\infty})\leq 8$.
Set
\[
I := \big\{ f\in Z_{2^\infty,3^\infty} : f(0)=f(1)=0 \big\}.
\]
Then $I$ is a closed, two-sided ideal in $Z_{2^\infty,3^\infty}$ and $Z_{2^\infty,3^\infty}/I\cong M_{2^\infty}\oplus M_{3^\infty}$.
Since $Z_{2^\infty,3^\infty}$ is nuclear, we obtain a short exact sequence
\[
0 \to A\otimes I \to A\otimes Z_{2^\infty,3^\infty} \to A\otimes(M_{2^\infty}\oplus M_{3^\infty}) \to 0.
\]
Note that $A\otimes I$ is isomorphic to a closed, two-sided ideal in $A\otimes M_{6^\infty}\otimes C([0,1])$.

Since real rank zero is preserved by passing to matrix algebras and inductive limits, we get $\rr(A\otimes M_{k^\infty})=0$ for $k=2,3,6$.
By \autoref{prp:UHFabs}, we obtain $\gr(A\otimes M_{k^\infty})\leq 1$ for $k=2,3,6$.
Using that the generator rank does not increase when passing to closed, two-sided ideals (\autoref{prp:permanence}) at the first step, and \autoref{prp:tensI} for $A\otimes M_{6^\infty}$ at the second step, we get
\[
\gr(A\otimes I)\leq \gr\big( A\otimes M_{6^\infty}\otimes C([0,1]) \big) \leq 6.
\]

By \autoref{prp:gr_sum_rr0}, we have
\[
\gr\big( A\otimes(M_{2^\infty}\oplus M_{3^\infty}) \big)
= \max\big\{ \gr(A\otimes M_{2^\infty}), \gr(A\otimes M_{3^\infty}) \big\}
\leq 1.
\]

Applying the estimate for the generator rank of an extension (\autoref{prp:permanence}), we get
\[
\gr( A\otimes Z_{2^\infty,3^\infty} )
\leq \gr(A\otimes I)+\gr(A\otimes(M_{2^\infty}\oplus M_{3^\infty}) + 1
\leq 8,
\]
as desired.
\end{proof}

\section{Establishing generator rank one}
\label{sec:reduceGr}

In this section we prove our main result:
separable, $\mathcal{Z}$-stable \ca{s} of real rank zero have generator rank one;
see \autoref{prp:mainThm}.
We deduce some interesting corollaries, most importantly that every classifiable, simple, nuclear \ca{} has generator rank one;
see \autoref{prp:classifiable-gr1}.

Recall that the dimension-drop algebra $Z_{2,3}$ is defined as
\[
Z_{2,3} := \big\{ f\in C([0,1],M_2\otimes M_3 : f(0)\in M_2\otimes 1, f(1)\in 1\otimes M_3 \big\}.
\]
Below, we always view $Z_{2,3}$ as the subalgebra of $C([0,1),M_6)$ given by the next result.

\begin{lma}
The dimension-drop algebra $Z_{2,3}$ is isomorphic to the subalgebra of $C([0,1],M_6)$ consisting of the continuous functions $f\colon [0,1]\to M_6$ that satisfy
\[
f(0)=\left(
\begin{array}{cccccc}
\alpha_{11}\!\!\! & \alpha_{12}\!\!\! \\
\alpha_{21}\!\!\! & \alpha_{22}\!\!\! \\
& & \alpha_{11}\!\!\! & \alpha_{12}\!\!\! \\
& & \alpha_{21}\!\!\! & \alpha_{22}\!\!\! \\
& & & & \alpha_{11}\!\!\! & \alpha_{12} \\
& & & & \alpha_{21}\!\!\! & \alpha_{22} \\
\end{array}\right),\
f(1)=\left(
\begin{array}{cccccc}
\beta_{11} \!\!\! & \beta_{12} \!\!\! & \beta_{13} \!\!\! \\
\beta_{21} \!\!\! & \beta_{22} \!\!\! & \beta_{23} \!\!\! \\
\beta_{31} \!\!\! & \beta_{32} \!\!\! & \beta_{33} \!\!\! \\
& & & \beta_{33} \!\!\! & \beta_{31} \!\!\! & \beta_{32} \\
& & & \beta_{13} \!\!\! & \beta_{11} \!\!\! & \beta_{12} \\
& & & \beta_{23} \!\!\! & \beta_{21} \!\!\! & \beta_{22} \\
\end{array}\right),
\]
for some $\alpha_{jk},\beta_{jk}\in\CC$.
\end{lma}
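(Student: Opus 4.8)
The plan is to realize the claimed subalgebra as the image of $Z_{2,3}$ under conjugation by a continuous path of unitaries in $M_6$. First I would fix the standard identification $\psi\colon M_2\otimes M_3\to M_6$ coming from the Kronecker product, and use it to view $Z_{2,3}$ as the algebra of continuous paths $g\colon[0,1]\to M_6$ with $g(0)\in C_0:=\psi(M_2\otimes 1)$ and $g(1)\in C_1:=\psi(1\otimes M_3)$. Writing $D_0$ for the set of matrices of the displayed form $f(0)$ and $D_1$ for the set of matrices of the displayed form $f(1)$, the subalgebra $\mathcal{A}\subseteq C([0,1],M_6)$ in the statement is precisely $\{g : g(0)\in D_0,\ g(1)\in D_1\}$. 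It therefore suffices to produce a $*$-automorphism of $C([0,1],M_6)$ carrying the boundary conditions $(C_0,C_1)$ to $(D_0,D_1)$.

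Next I would observe that $D_0=\{\diag(A,A,A):A\in M_2\}$ is a unital copy of $M_2$ in $M_6$ (of multiplicity $3$), while $D_1=\{\diag(B,\pi(B)):B\in M_3\}$, where $\pi$ is conjugation by the permutation matrix of the $3$-cycle $(1\,2\,3)$, is a unital copy of $M_3$ in $M_6$ (of multiplicity $2$); here the twist $\pi$ is inner, so $\mathrm{id}\oplus\pi$ is genuinely a unital embedding of $M_3$. Since $C_0$ and $D_0$ are both unital copies of $M_2$ inside $M_6$, they necessarily have the same multiplicity $3$, and likewise $C_1$ and $D_1$ are unital copies of $M_3$ of the common multiplicity $2$. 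By the uniqueness of unital representations of a matrix algebra up to unitary equivalence, there exist unitaries $u_0,u_1\in M_6$ with $u_0 C_0 u_0^*=D_0$ and $u_1 C_1 u_1^*=D_1$.

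Then, using that the unitary group of $M_6$ is path-connected, I would choose a norm-continuous path $u\colon[0,1]\to M_6$ of unitaries with $u(0)=u_0$ and $u(1)=u_1$, and define $\Theta\colon C([0,1],M_6)\to C([0,1],M_6)$ by $(\Theta g)(t):=u(t)\,g(t)\,u(t)^*$. This is a $*$-automorphism, and evaluating at the endpoints shows that $\Theta$ restricts to a $*$-isomorphism from the realization of $Z_{2,3}$ onto $\mathcal{A}$: indeed $g(0)\in C_0$ holds if and only if $(\Theta g)(0)=u_0 g(0) u_0^*\in D_0$, and similarly at $t=1$, while the interior is unconstrained in both algebras.

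I expect the only genuine work to be the bookkeeping in the second step, namely verifying that the two displayed boundary forms are exactly unital copies of $M_2$ and $M_3$ of the correct multiplicities, and in particular recognizing the permuted lower-right block of $f(1)$ as $\mathrm{id}\oplus\pi$ with $\pi$ inner. The conceptual engine, that any two unital embeddings of a matrix algebra into $M_6$ with equal multiplicity are unitarily conjugate and that the conjugators can be joined through the connected unitary group, is standard. The specific cyclic twist appearing in $f(1)$ plays no essential role in the isomorphism and is presumably recorded only for use in the later construction of generators.
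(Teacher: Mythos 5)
Your proof is correct and follows essentially the same route as the paper: both realize the isomorphism as conjugation by a continuous path of unitaries acting pointwise on $C([0,1],M_6)$, using path-connectedness of the unitary group of $M_6$; the paper simply exhibits the endpoint unitary explicitly as a permutation matrix (having chosen the identification of $M_2\otimes M_3$ with $M_6$ so that the condition at $t=0$ holds on the nose), whereas you produce both endpoint unitaries abstractly from the uniqueness, up to unitary equivalence, of unital embeddings of a matrix algebra into $M_6$. Your version is marginally more robust, since it is independent of the particular identification and of checking the explicit permutation, but the underlying argument is the same.
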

\begin{proof}
Using an identification of $M_2\otimes M_3$ with $M_6$, we naturally view $Z_{2,3}$ as a subalgebra of $C([0,1],M_6)$.
Let $u\in M_6$ be the permutation matrix given as:
\[
u := 
\left(
\begin{array}{cccccc}
1 & 0 & 0 & 0 & 0 & 0 \\
0 & 1 & 0 & 0 & 0 & 0 \\
0 & 0 & 1 & 0 & 0 & 0 \\
0 & 0 & 0 & 0 & 0 & 1 \\
0 & 0 & 0 & 1 & 0 & 0 \\
0 & 0 & 0 & 0 & 1 & 0 \\
\end{array}\right).
\]
Let $t\mapsto v_t$ be a continuous path of unitaries in $M_6$ with $v_0=1$ and $v_1=u$.
Then $v$ is a unitary in $C([0,1],M_6)$ that conjguates $Z_{2,3}\subseteq C([0,1],M_6)$ onto the subalgebra  of functions described in the statement.
\end{proof}

\begin{lma}
\label{prp:construction}
Let $A$ be a unital, separable \ca{} of real rank zero, and let $n\in\NN$ such that $\gr(A)\leq n+2$.
Let $x_0,\ldots,x_{n+1}\in A_\sa$, $\varepsilon>0$ and $z\in A$.
Then there exist $y_0,\ldots,y_{n+1}\in (A\otimes Z_{2,3})_\sa$ such that
\[
\left\| y_j - (x_j\otimes 1) \right\| < \varepsilon\ \text{ for } j=0,\ldots,n+1, \andSep
z\otimes 1 \in_\varepsilon C^*(y_0,\ldots,y_{n+1}).
\]
\end{lma}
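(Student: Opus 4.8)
The plan is to view $A\otimes Z_{2,3}$, via the preceding lemma, as a $C([0,1])$-subalgebra of $C([0,1],A\otimes M_6)$ with generic fiber $A\otimes M_6$ and boundary fibers $A\otimes M_2$ (at $0$) and $A\otimes M_3$ (at $1$), and to construct the $y_j$ as small perturbations of the constant fields $x_j\otimes 1$ so that the generated algebra $D:=C^*(y_0,\ldots,y_{n+1})$ has two features: it separates the points of $[0,1]$ in the sense of \autoref{prp:genCenter}(b), and in every fiber it approximately captures the image of $z\otimes 1$. Once $D$ separates points, \autoref{prp:genCenter} gives $C([0,1])\subseteq D$, so $D$ is a $C([0,1])$-subalgebra of $A\otimes Z_{2,3}$; combined with a fiberwise $\varepsilon$-approximation of $z\otimes 1$ and a partition-of-unity (local--global) argument over $[0,1]$ --- exactly the Stone--Weierstra{\ss}-type principle already used in the proof of \autoref{prp:tensI} --- this yields $z\otimes 1\in_\varepsilon D$.

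First I would put the data in a convenient normal form. Using that $A$ has real rank zero, I would replace each $x_j$ by a self-adjoint element with finite spectrum, and prepare $z$ for encoding as in the proof of \autoref{prp:UHFabs}: write $z$ as a combination of positive invertible elements and, after rescaling, choose small positive invertible ``pieces'' whose generated sub-\ca{} still contains $z$. The point of rescaling is that $C^*(c)=C^*(\delta c)$, so the pieces may be taken of norm $<\varepsilon$ while continuing to generate $z$; this is precisely what lets the off-diagonal perturbations below keep each $y_j$ within $\varepsilon$ of $x_j\otimes 1$.

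The heart of the construction combines the interpolation scheme of \autoref{prp:approxGenFibers} with the matrix encoding of \autoref{prp:UHFabs}. On a fine grid of sample points of $[0,1]$ I would, using $\gr(A)\le n+2$ and \autoref{prp:grCharSep}, pick $(n+3)$-tuples of self-adjoint elements of $A$ that approximate the sampled values $x_0,\ldots,x_{n+1}$ (together with one extra slot carrying the pieces of $z$) and that generate $A$; I would then compress these $n+3$ elements into only $n+2$ self-adjoint elements of the appropriate matrix fiber by the Olsen--Zame device of \autoref{prp:UHFabs}, placing one of them on a spectrally separated diagonal --- so that functional calculus recovers the diagonal matrix units --- and hiding the rest in off-diagonal corners with small, rescaled entries. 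Patching the sampled recipes by a partition of unity as in \autoref{prp:approxGenFibers}, and simultaneously adding the $t$-dependent shifts $t\mu$ with pairwise disjoint finite spectra as in \autoref{prp:approxSepInterval}, should produce continuous $y_0,\ldots,y_{n+1}$ with $\|y_j-x_j\otimes 1\|<\varepsilon$ such that $D$ separates the points of $[0,1]$ and each fiber of $D$ contains, up to $\varepsilon$, the image of $z\otimes 1$: in a fiber one extracts the matrix units, recovers $z$ in the $(1,1)$-corner from the encoded pieces and the $\ge n+3$ extractable values (which generate $A$ by $\gr(A)\le n+2$), and spreads it across the diagonal to obtain $z\otimes 1$ in that fiber.

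The main obstacle is threading this encoding continuously through the dimension drop at the two endpoints. Over the interior the fiber is the full matrix algebra $M_6$, offering up to five off-diagonal ``slots'' per off-diagonal generator; but the boundary conditions force $y_j(0)\in A\otimes M_2\otimes 1$ and $y_j(1)\in A\otimes 1\otimes M_3$, so near $0$ only the $M_2$-matrix units and near $1$ only the $M_3$-matrix units are available, giving merely one, respectively two, slots per generator. Two things must be checked here: first, that the matrix-unit extraction and the point-separation survive the degeneration of $M_6$ to $M_2$ and to $M_3$ --- this is where the coprimality $M_6=M_2\otimes M_3$ makes the two endpoint structures compatible with a single $M_6$-valued field in the interior; and second, that the counting still closes at $0$ and $1$, where with only $n+2$ generators and the scarce endpoint slots it is exactly the hypothesis $\gr(A)\le n+2$ that guarantees the at least $n+3$ extractable elements can be perturbed to generate $A$, so that $z$ is captured even in the worst fibers. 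Once the fiberwise capture holds and $D$ separates points, the local--global principle completes the proof.
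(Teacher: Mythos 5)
There is a genuine gap at precisely the point you yourself flag as ``the main obstacle'': your proposal offers no working mechanism for capturing $z\otimes 1$ in every fiber, and both devices you gesture at fail. (i) Fiberwise generation: the $t$-dependent spectral shifts you must add for point separation perturb the extracted tuple in every fiber, and generation is not stable under perturbation. Once the sub-\ca{} $B=C^*(y_0,\ldots,y_{n+1})$ is fixed, you cannot ``perturb the extractable elements to generate $A$'' inside a fiber --- the extracted elements are what they are, namely shifted copies of your chosen tuple, and a shifted copy of a generating tuple need not generate. (ii) UHF-style slot encoding of the pieces of $z$ as in \autoref{prp:UHFabs}: the counting does not close at the endpoints for small $n$. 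In the final induction step of \autoref{prp:mainThm} one has $n=0$, so only \emph{two} fields are available; at $t=0$ the $M_2$-block pattern admits a single self-adjoint off-diagonal slot, which is already needed for the positive invertible Olsen--Zame pivot that normalizes the matrix units, leaving no room for the four positive invertible pieces of $z$. The paper's proof resolves this with a device your proposal lacks: apply \autoref{prp:grCharSep} \emph{once} to the $(n+3)$-tuple $(\tfrac{\varepsilon}{2},x_0,\ldots,x_{n+1})$ (the perturbed scalar supplying, for free, a positive invertible pivot $a$ of norm $<\varepsilon$), choose a polynomial $p$ with $\|z-p(a,b,c,x_2',\ldots,x_{n+1}')\|<\varepsilon$ together with a stability radius $\delta$ in the argument $c$, place only a finite-spectrum $\tfrac{\delta}{2}$-approximant $c'$ of $c$ on the shifted diagonal, and recover $a,b,x_2',\ldots,x_{n+1}'$ \emph{exactly} in the $(1,1)$-corner of each fiber while $f_{11}(t)$ stays within $\delta$ of $c$. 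Thus $z\otimes e_{11}\in_\varepsilon B(t)$ without encoding $z$ anywhere, and only \emph{approximate} fiberwise capture --- which is all \cite[Lemma~2.1]{Dad09CtsFieldsOverFD} needs --- is ever claimed.

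Two further components of your plan are off. The grid-sampling/partition-of-unity step is superfluous and, as described, harmful: the fields to be approximated are the \emph{constant} fields $x_j\otimes 1$, so there is nothing to sample, and if you used different generating recipes at different grid points, the overlap regions would mix the Olsen--Zame encodings and capture of $z$ would fail there. The interleaving in \autoref{prp:approxGenFibers} works only because its $4=2+2$ generators split into two families each sufficient on its own; here the full $(n+2)$-tuple is needed for capture, and the budget does not admit two independent families. The paper instead uses one global tuple, constant in $t$ except for the two fields $f$ and $g$. Finally, a single shift $t\mu$ as in \autoref{prp:approxSepInterval} is incompatible with the dimension drop: the boundary conditions force prescribed coincidences among the six shifted diagonal entries at $t=0$ and $t=1$, which is why the paper designs six functions $\alpha^{(1)},\ldots,\alpha^{(6)}$ with exactly those coincidences and strict ordering on $(0,1)$, and why the separation argument (Claim~3 there) needs a two-case functional-calculus analysis and a bootstrap from $1\otimes(e_{33}+e_{44})$ to the full unit using the fiberwise matrix units.
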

\begin{proof}
Consider the $n+3$ elements $\tfrac{\varepsilon}{2},x_0,\ldots,x_{n+1}\in A_\sa$.
Using that $A$ is unital, separable with $\gr(A)\leq n+2$, apply \autoref{prp:grCharSep} to obtain $a,x_0',\ldots,x_{n+1}'\in A_\sa$ such that
\[
\left\| a-\frac{\varepsilon}{2} \right\| < \frac{\varepsilon}{2}, \
\left\| x_j' - x_j \right\| < \varepsilon\ \text{ for } j=0,\ldots,n+1, \ \text{and}\
A=C^*(a,x_0',\ldots,x_{n+1}').
\]
Note that $a$ is positive and invertible.
To simplify notation, set $b:=x_0'$ and $c:=x_1'$.
Choose a polynomial $p$ in noncommuting variables such that
\[
\left\| z-p(a,b,c,x_2',\ldots,x_{n+1}') \right\| < \varepsilon.
\]
Since $p$ is continuous as a map $A^{n+3}_\sa\to A$, we can choose $\delta>0$ such that every $\bar{c}\in A_\sa$ with $\|\bar{c}-c\|<\delta$ satisfies
\[
\left\| z-p(a,b,\bar{c},x_2',\ldots,x_{n+1}') \right\| < \varepsilon.
\]

Since $A$ has real rank zero, we can choose an invertible element $c'\in A_\sa$ with finite spectrum $\sigma(c')$ and such that $\|c'-c\|<\tfrac{\delta}{2}$.
Then $c'=\sum_{j\in J}\lambda_j q_j$ for some finite set $J$, some pairwise disjoint real numbers $\lambda_j$ (the eigenvalues of $c'$), and some pairwise orthogonal projections $q_j$ that sum to $1_A$.

Choose $\mu>0$ such that any two points in $\{0\}\cup\{\lambda_j:j\in J\}$ have distance strictly larger than $4\mu$.
We may assume that $2\mu<\varepsilon$ and $2\mu<\tfrac{\delta}{2}$.

Next, we define auxiliary functions $\alpha^{(k)}\colon[0,1]\to\RR$ for $k=1,\ldots,6$ by
\begin{align*}
\alpha^{(1)}(t):= 1+t(1-t), \quad
\alpha^{(5)}(t):= 1, \quad
\alpha^{(3)}(t):=1-t,
\end{align*}
and
\[
\alpha^{(6)}(t):=-\alpha^{(1)}(t), \quad
\alpha^{(2)}(t):=-\alpha^{(5)}(t), \quad
\alpha^{(4)}(t):=-\alpha^{(3)}(t), \quad
\]
for $t\in[0,1]$.
The functions are shown in the following picture:
\[
\makebox{
\begin{tikzpicture}[xscale=1.5, yscale=0.8]
\draw [help lines, ->] (-0.1,0) -- (1.2,0);
\draw [help lines, ->] (0,-1.5) -- (0,1.5);
\draw [thick, domain=0:1, samples=500] plot (\x, {1+\x-\x^2});
\draw [thick] (0,1) -- (1,1);
\draw [thick] (0,1) -- (1,0);
\draw [thick] (0,-1) -- (1,0);
\draw [thick] (0,-1) -- (1,-1);
\draw [thick, domain=0:1, samples=500] plot (\x, {-1-\x+\x^2});
\node [left] at (0,1) {$1$};
\node [left] at (0,-1) {$-1$};
\draw [->]  (1.1,1.2) -- (0.85,1.2);
\node [right] at (1.3,1.3) {$\alpha^{(1)}$};
\draw [->]  (1.3,0.5) -- (0.75,0.95);
\node [right] at (1.3,0.5) {$\alpha^{(5)}$};
\draw [->] (-0.15,0.4) -- (0.35,0.55);
\node [left] at (-0.1,0.4) {$\alpha^{(3)}$};
\draw [->] (-0.15,-0.4) -- (0.35,-0.55);
\node [left] at (-0.1,-0.4) {$\alpha^{(4)}$};
\draw [->]  (1.3,-0.5) -- (0.75,-0.95);
\node [right] at (1.3,-0.5) {$\alpha^{(2)}$};
\draw [->]  (1.1,-1.2) -- (0.85,-1.2);
\node [right] at (1.3,-1.3) {$\alpha^{(6)}$};
\end{tikzpicture}
}
\]

We note the following properties:
\begin{enumerate}
\item[(a)]
$\alpha^{(k)}$ is continuous with $\|\alpha^{(k)}\|_\infty<2$, for $k=1,\ldots,6$;
\item[(b)]
$\alpha^{(1)}(0)=\alpha^{(5)}(0)=\alpha^{(3)}(0)$ and $\alpha^{(4)}(0)=\alpha^{(2)}(0)=\alpha^{(6)}(0)$;
\item[(c)]
$\alpha^{(1)}(1)=\alpha^{(5)}(1)$, $\alpha^{(3)}(1)=\alpha^{(4)}(1)$, and $\alpha^{(2)}(1)=\alpha^{(6)}(1)$;
\item[(d)]
$\alpha^{(6)}(t)<\alpha^{(2)}(t)<\alpha^{(4)}(t)<\alpha^{(3)}(t)<\alpha^{(5)}(t)<\alpha^{(1)}(t)$ for each $t\in(0,1)$.
\end{enumerate}

For each $k=1,\ldots,6$, we define $f_{kk}\colon[0,1]\to A$ by
\begin{align*}
f_{kk}(t) := \sum_{j\in J} \big(\lambda_j+\mu \alpha^{(k)}(t)\big) q_j
\end{align*}
for $t\in[0,1]$.
We let $e_{kl}\in M_6$ denote the matrix units, for $k,l=1,\ldots,6$.
Then define $f,g\colon[0,1]\to A\otimes M_6$ by
\begin{align*}
f(t) &:= \sum_{k=1}^6 f_{kk}(t)\otimes e_{kk}, \\
g(t) &:= b\otimes 1 
+ a\otimes(e_{12}+e_{21}+e_{56}+e_{65}) 
+ \mu t (e_{23}+e_{32}+e_{46}+e_{64}) \\
&\quad + (1-t)a(e_{34}+e_{43}).
\end{align*}
for $t\in[0,1]$.

This means, that $f(t)$ and $g(t)$ have the following matrix form:
\begin{align*}
f(t) &:=\left( \begin{array}{ccc}
f_{11} \\
& \ddots \\
& & f_{66}\\
\end{array} \right),\
g(t) &:=\left( \begin{array}{cc|cc|cc}
b & a & & & \\
a & b & \mu t & & \\
\hline 
& \mu t & b & (1-t)a & & \\
& & (1-t)a & b & & \mu t \\
\hline 
& & & & b & a \\
& & & \mu t & a & b\\
\end{array} \right).
\end{align*}

We view elements in $A\otimes Z_{2,3}$ as continuous functions $[0,1]\to A\otimes M_6$.
By~(a), $f$ is continuous.
Using~(b) and~(c), we deduce that $f$ belongs to $A\otimes Z_{2,3}$.
We also have $g\in A\otimes Z_{2,3}$.
Set
\[
B := C^*(f,g,x_2'\otimes 1,\ldots,x_{n+1}'\otimes 1)\subseteq A\otimes Z_{2,3}.
\]
For each $t\in[0,1]$, we let $B(t)\subseteq A\otimes M_6$ be the image of $B$ under the evaluation map $A\otimes Z_{2,3}\to A\otimes M_6$, $h\mapsto h(t)$.
We use $e_{kl}$ to denote the matrix units in $M_6$.

\emph{Claim~1a: Let $t\in(0,1)$. Then $1\otimes M_6\subseteq B(t)$.}
To prove the claim, note that the spectrum of $f_{kk}(t)$ is
\[
\sigma(f_{kk}(t)) = \big\{ \lambda_j+\alpha_j^{(k)}(t) : j\in J \big\},
\]
for $k=1,\ldots,6$.
Using~(d), we obtain that $\sigma(f_{kk}(t))$ and $\sigma(f_{ll}(t))$ are disjoint whenever $k\neq l$.

Given $k\in\{1,\ldots,6\}$, let $h_k\colon\RR\to[0,1]$ be a continuous function that takes the value $1$ on $\sigma(f_{kk}(t))$, and that takes the value $0$ on $\{0\}\cup\bigcup_{l\neq k}\sigma(f_{ll}(t))$.
Then $1\otimes e_{kk}=h_k(f(t))\in B(t)$.
Thus, $B(t)$ contains the diagonal matrix units of $1\otimes M_6$.

It follows that $B(t)$ contains
\[
1\otimes e_{23}
= \frac{1}{\mu t} (1\otimes e_{22})g(t)(1\otimes e_{33}), \andSep
1\otimes e_{46}
= \frac{1}{\mu t} (1\otimes e_{44})g(t)(1\otimes e_{66}).
\]

We have $a\otimes e_{12} = (1\otimes e_{11})g(t)(1\otimes e_{22}) \in B(t)$, and thus
\[
a^2\otimes e_{22} = (a\otimes e_{12})^*(a\otimes e_{12}) \in B(t).
\]
As in the proof of \autoref{prp:UHFabs}, we use that $a$ is positive and invertible, to deduce $a^{-1}\otimes e_{22}\in B(t)$, and thus
\[
1\otimes e_{12} = (a\otimes e_{12})(a^{-1}\otimes e_{22})\in B(t).
\]
Analogously, we get that $B(t)$ contains $1\otimes e_{34}$ and $1\otimes e_{56}$.
It follows that $B(t)$ contains $1\otimes e_{kl}$ for every $k,l\in\{1,\ldots,6\}$, and so $1\otimes M_6\subseteq B(t)$.
Similarly, one proves:

\emph{Claim~1b: We have
\[
1\otimes(e_{kl}+e_{k+2,l+2}+e_{k+4,l+4})\in B(0), \text{ for } k,l\in\{1,2\}.
\]
}

\emph{Claim~1c: We have $1\otimes(e_{33}+e_{44})\in B(1)$ and
\[
1\otimes(e_{kl}+e_{k+4,l+4})\in B(1), \text{ for } k,l\in\{1,2\}.
\]
and
\[
1\otimes(e_{l3}+e_{l+4,4}), 1\otimes(e_{3l}+e_{4,l+4}), \in B(1), \text{ for } l\in\{1,2\}.
\]
}

\emph{Claim~2: Let $t\in[0,1]$. Then $z\otimes 1_{M_6} \in_\varepsilon B(t)$.}
First, we assume that $t\in(0,1)$.
By Claim~1a, we have $1\otimes e_{11},1\otimes e_{21}\in B(t)$, and thus
\[
a\otimes e_{11} = (1\otimes e_{11})g(t)(1\otimes e_{21})\in B(t).
\]
Analogously, we obtain
\[
b\otimes e_{11},\ f_{11}(t)\otimes e_{11},\ x_2'\otimes e_{11},\ldots,x_{n+1}'\otimes e_{11}\in B(t).
\]
We have
\[
\| f_{11}(t)-c \|
\leq \| f_{11}(t)-c' \| + \| c'-c \|
< 2\mu +\frac{\delta}{2}
\leq \delta.
\]
By choice of $\delta$, we get
\[
\left\| z-p(a,b,f_{11}(t),x_2',\ldots,x_{n+1}') \right\| < \varepsilon.
\]
and thus
\[
z\otimes e_{11} \in_\varepsilon B(t).
\]
It follows that $z\otimes e_{kk} \in_\varepsilon B(t)$ for each $k$, and consequently $z\otimes 1 \in_\varepsilon B(t)$.

Next, we consider the case $t=0$.
Set $\tilde{e}_{kl}:=e_{kl}+e_{k+2,l+2}+e_{k+4,l+4}\in M_6$ for $k,l\in\{1,2\}$.
By Claim~1b, we have $1\otimes\tilde{e}_{kl}\in B(0)$ for each $k,l$, and thus
\[
a\otimes \tilde{e}_{11}
= (1\otimes\tilde{e}_{11}) g(0) (1\otimes\tilde{e}_{21}) \in B(0).
\]
Analogously, we obtain
\[
b\otimes \tilde{e}_{11},\ f_{11}(0)\otimes \tilde{e}_{11},\ x_2'\otimes \tilde{e}_{11},\ldots,x_{n+1}'\otimes \tilde{e}_{11}\in B(0).
\]
Arguing as in the proof of Claim~2a, we get $z\otimes\tilde{e}_{11}\in_\varepsilon B(0)$.
It follows that $z\otimes\tilde{e}_{22}\in_\varepsilon B(0)$, and consequently $z\otimes 1 \in_\varepsilon B(0)$.

Similarly, one proves $z\otimes 1 \in_\varepsilon B(1)$.

\emph{Claim~3: Let $s,t\in[0,1]$ with $s\neq t$. Then there exists $d\in B$ such that $d(s)=0$ and $d(t)=1$.}
We first consider the case $s<t$.
By choice of $\mu$, the intervals $[\lambda_j-2\mu,\lambda_j+2\mu]$ are pairwise disjoint for $j\in J$.
We may therefore choose a continuous function $h\colon\RR\to[0,1]$ that takes the value $0$ on 
\[
S:=\bigcup_{j\in J} \big( [\lambda_j-2\mu,\lambda_j-(1-s)\mu] \cup [\lambda_j+(1-s)\mu,\lambda_j+2\mu]).
\]
and that takes the value $1$ on
\[
T:=\bigcup_{j\in J} [\lambda_j-(1-t)\mu,\lambda_j+(1-t)\mu],
\]
Note that $T$ consists of the real numbers that have distance at most $(1-t)\mu$ to some $\lambda_j$.
For the purposes below, one could consider $S$ as the real numbers that have distance at least $(1-s)\mu$ to each $\lambda_j$.

Set $c:=h(f)\in B$, the element obtained by applying functional calculus for $h$ to $f$.
Since $f=\diag(f_{11},\ldots,f_{66})$, we have $c=\diag(h(f_{11}),\ldots,h(f_{66})$.

Let $r\in[0,1]$.
We have
\[
\sigma(f_{kk}(r))=\{\lambda_j+\mu\alpha^{(k)}(r):j\in J\}.
\]
If $k\in\{1,5,2,6\}$, then $|\alpha^{(k)}(r)|\geq 1$, and thus
\[
|\lambda_j-(\lambda_j+\mu\alpha^{(k)}(r))|\geq \mu \geq (1-s)\mu,
\]
whence
\[
\sigma(f_{kk}(r))\subseteq S.
\]
It follows that 
\[
h(f_{11})=h(f_{55})=h(f_{22})=h(f_{66})=0.
\]
For $k\in\{3,4\}$, we have
\[
|\lambda_j-(\lambda_j+\mu\alpha^{(k)}(s))| = \mu|1-s|, \andSep
|\lambda_j-(\lambda_j+\mu\alpha^{(k)}(t))| = \mu|1-t|,
\]
whence 
\[
\sigma(f_{kk}(s))\subseteq S, \andSep 
\sigma(f_{kk}(t))\subseteq T.
\]
It follows that 
\[
h(f_{33})(s)=h(f_{44})(s)=0, \andSep
h(f_{33})(t)=h(f_{44})(t)=1.
\]
In conclusion, we have
\[
c(s)=0, \andSep 
c(t)=1\otimes(e_{33}+e_{44}).
\]

If $t<1$, then by Claim~1a there exist $b_{kl}\in B$ such that $b_{kl}(t)=1\otimes e_{kl}$ for $k,l\in\{1,\ldots,6\}$.
Then the following element has the desired properties:
\[
d := c + b_{13}cb_{31} + b_{23}cb_{32} + b_{53}cb_{35} + b_{63}cb_{36}.
\]

If $t=1$, then by Claim~1c there exist $b_{k3},b_{3l}\in B$ such that $b_{k3}(1)=1\otimes (e_{k3}+e_{k+4,4})$ and $b_{3l}(1)=1\otimes (e_{3l}+e_{4,l+4})$ for $k,l\in\{1,2\}$.
Then the following element has the desired properties:
\[
d := c + b_{13}cb_{31} + b_{23}cb_{32}.
\]

Next, let us indicate how to proceed in the case $s>t$.
Let $h\colon\RR\to[0,1]$ be a continuous function that takes the value $0$ on 
\[
S:=\bigcup_{j\in J} \big( [\lambda_j-2\mu, \lambda_j-\mu] \cup [\lambda_j-(1-s)\mu,\lambda_j+(1-s)\mu] \cup [\lambda_j+\mu,\lambda_j+2\mu]).
\]
and that takes the value $1$ on
\[
T:=\bigcup_{j\in J} \{\lambda_j-(1-t)\mu,\lambda_j+(1-t)\mu\}.
\]
The element $c:=h(f)\in B$ satisfies $c(s)=0$ and $c(t)\neq 0$.
Similar as in the case $s<t$, one then constructs $d\in B$ such that $d(s)=0$ and $d(t)=1$, which proves the claim.

Claim~3 verifies condition~(b) in \autoref{prp:genCenter}, whence $B$ contains $C([0,1])$.
Thus, $B$ is a $C([0,1])$-subalgebra of $A\otimes Z_{2,3}$.
If $\pi_t$ denotes the quotient map from $A\otimes Z_{2,3}$ to the fiber at $t$, then Claim~2 shows that $\pi_t(z\otimes 1)\in_\varepsilon\pi_t(B)=B(t)$ for every $t\in[0,1]$.
By \cite[Lemma~2.1]{Dad09CtsFieldsOverFD}, we get $z\otimes 1\in_\varepsilon B$, which proves the result.
\end{proof}

\begin{thm}
\label{prp:mainThm}
Let $A$ be a separable, $\mathcal{Z}$-stable \ca{} of real rank zero.
Then~$A$ has generator rank one.
In particular, a generic element of $A$ is a generator.
\end{thm}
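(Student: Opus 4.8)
The plan is to reduce to the unital case and then run a finite downward induction on the generator rank, using $\mathcal{Z}$-stability to feed the construction of \autoref{prp:construction} into the absorption criterion \autoref{prp:genDabsorbing}.

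\emph{Reduction to the unital case.} If $A$ is non-unital, I would use real rank zero to choose an increasing sequential approximate unit of projections $(p_k)_k$ in $A$. Each corner $p_kAp_k$ is then unital, separable, of real rank zero, and, since $\mathcal{Z}$-stability passes to hereditary sub-\ca{s}, again $\mathcal{Z}$-stable; moreover $A=\varinjlim_k p_kAp_k$ along the inclusions, so the inductive-limit estimate of \autoref{prp:permanence} reduces the bound on $\gr(A)$ to bounds on $\gr(p_kAp_k)$. Thus we may assume $A$ unital, whence $\gr(A)=\grPre(A)$ because $A$ has real rank zero.

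\emph{Finiteness and the inductive scheme.} Using $A\cong A\otimes\mathcal{Z}$ and \autoref{prp:gr8}, I first record the finite bound $\gr(A)=\gr(A\otimes\mathcal{Z})\leq 8$. The heart is then the claim that, for every $n\in\NN$, $\gr(A)\leq n+2$ implies $\gr(A)\leq n+1$. Applying this for $n=6,5,\ldots,0$ collapses $\gr(A)\leq 8$ all the way to $\gr(A)\leq 1$.

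\emph{The inductive step.} Fix $n$ with $\gr(A)\leq n+2$. Since $A$ is $\mathcal{Z}$-stable, \autoref{prp:genDabsorbing} with $D=\mathcal{Z}$ reduces $\grPre(A)\leq n+1$ to its condition~(3): for given $x_0,\ldots,x_{n+1}\in A_\sa$, $\varepsilon>0$ and $z\in A$, produce $y_0,\ldots,y_{n+1}\in(A\otimes\mathcal{Z})_\sa$ with $\|y_j-(x_j\otimes 1)\|<\varepsilon$ and $z\otimes 1\in_\varepsilon C^*(y_0,\ldots,y_{n+1})$. Here \autoref{prp:construction} applies verbatim (its hypothesis is exactly $\gr(A)\leq n+2$) and supplies such a tuple $\tilde y_0,\ldots,\tilde y_{n+1}$ inside $A\otimes Z_{2,3}$. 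Fixing any unital ${}^*$-homomorphism $\iota\colon Z_{2,3}\to\mathcal{Z}$ (e.g.\ the inclusion $Z_{2,3}\hookrightarrow Z_{2^\infty,3^\infty}\subseteq\mathcal{Z}$) and pushing forward along the unital, contractive map $\mathrm{id}_A\otimes\iota$, I set $y_j:=(\mathrm{id}_A\otimes\iota)(\tilde y_j)$; unitality of $\iota$ fixes $x_j\otimes 1$ and $z\otimes 1$, and contractivity preserves both the norm estimates and the relation $z\otimes 1\in_\varepsilon C^*(y_0,\ldots,y_{n+1})$. This verifies condition~(3), giving $\gr(A)\leq n+1$ and completing the induction. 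Hence $\gr(A)\leq 1$; since a nonzero $\mathcal{Z}$-stable algebra is noncommutative, \autoref{prp:gr0} upgrades this to $\gr(A)=1$, and the statement about generic generators follows from \autoref{rmk:gr1}.

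\emph{Main obstacle.} All the genuine difficulty is packaged into \autoref{prp:construction}, whose delicate fiberwise construction produces, after tensoring with $Z_{2,3}$, a generating tuple with one fewer element --- the single unit of decrease that powers the induction. The remaining work is organizational: recognizing that $\mathcal{Z}$-stability is exactly what lets \autoref{prp:genDabsorbing} transport generators from $A\otimes Z_{2,3}$ into $A\cong A\otimes\mathcal{Z}$, and dealing with the non-unital case.
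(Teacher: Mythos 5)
Your proposal is correct and follows essentially the same route as the paper's own proof: reduction to the unital case via an approximate unit of projections, the finite bound $\gr(A)\leq 8$ from \autoref{prp:gr8}, and the seven-fold descent $\gr(A)\leq n+2\Rightarrow\gr(A)\leq n+1$ obtained by feeding \autoref{prp:construction} into condition~(3) of \autoref{prp:genDabsorbing} after identifying $Z_{2,3}$ with a unital sub-\ca{} of $\mathcal{Z}$. Your only addition is to spell out the unital embedding $Z_{2,3}\hookrightarrow Z_{2^\infty,3^\infty}\subseteq\mathcal{Z}$ and why it transports the approximation data, which the paper leaves implicit.
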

\begin{proof}
We first prove the theorem under the additional assumption that $A$ is unital.
In this case, we have $\gr(A)\leq 8$ by \autoref{prp:gr8}.
Next, we successively reduce the upper bound for $\gr(A)$.

\emph{Claim: 
Let $n\in\NN$ such that $\gr(A)\leq n+2$.
Then $\gr(A)\leq n+1$.
}
To prove the claim, we verify condition~(3) of \autoref{prp:genDabsorbing}. 
Let $x_0,\ldots,x_{n+1}\in A_\sa$, $\varepsilon>0$ and $z\in A$.
We need to find $y_0,\ldots,y_{n+1}\in(A\otimes\mathcal{Z})_\sa$ such that
\[
\left\| y_j - (x_j\otimes 1) \right\| < \varepsilon\ \text{ for } j=0,\ldots,n+1, \andSep
z\otimes 1 \in_\varepsilon C^*(y_0,\ldots,y_{n+1}).
\]
By identifying $Z_{2,3}$ with a unital sub-\ca{} of $\mathcal{Z}$, elements $y_0,\ldots,y_{n+1}$ with the desired properties are provided by \autoref{prp:construction}.

Applying the claim seven times, we obtain that $\gr(A)\leq 1$.

If $A$ is nonunital, we use that $A$ is separable and has real rank zero to choose an increasing approximate unit $(p_n)_n$ of projections in $A$;
see \cite[Proposition~2.9]{BroPed91CAlgRR0}.
For each~$n$, we consider the unital corner $A_n:=p_nAp_n$.
By \cite[Corollary~2.8]{BroPed91CAlgRR0}, real rank zero passes to hereditary sub-\ca{s}.
By \cite[Corollary~3.1]{TomWin07ssa}, $\mathcal{Z}$-stability passes to hereditary sub-\ca{s}.
Thus, $A_n$ is a unital, separable, $\mathcal{Z}$-stable \ca{} of real rank zero, and thus $\gr(A_n)\leq 1$.
By \autoref{prp:permanence}, we get
\[
\gr(A)\leq\liminf_n \gr(A_n) = 1.
\]
Since $A$ is noncommutative (if nonzero), we have $\gr(A)\neq 0$ by \autoref{prp:gr0}, and so $\gr(A)=1$.
Since $A$ is separable and has real rank zero, $\gr(A)=1$ means that generators in $A$ are a dense $G_\delta$-subset;
see \autoref{rmk:gr1}
\end{proof}

\begin{rmk}
Using the methods developed in \cite[Section~4]{Thi12arX:GenRnk}, one can remove the assumption of separability in \autoref{prp:mainThm}:
Every $\mathcal{Z}$-stable \ca{} of real rank zero has generator rank one.
They key point is that for every $\mathcal{Z}$-stable \ca{} $A$ and every separable sub-\ca{} $B_0\subseteq A$, there exists a separable, $\mathcal{Z}$-stable sub-\ca{} $B\subseteq A$ with $B_0\subseteq B$.
Similar methods are used in the proof of \autoref{prp:nuclPI} below.
\end{rmk}

For the definition and the basic properties of pure infiniteness for nonsimple \ca{s}, we refer to \cite{KirRor00PureInf}.

\begin{cor}
\label{prp:nuclPI}
Every nuclear, purely infinite \ca{} of real rank zero has generator rank one.
\end{cor}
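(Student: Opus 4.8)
The statement to prove is \autoref{prp:nuclPI}: every nuclear, purely infinite \ca{} of real rank zero has generator rank one. My strategy is to reduce this to the already-proven \autoref{prp:mainThm}, which applies to separable, $\mathcal{Z}$-stable \ca{s} of real rank zero. There are two gaps to bridge: first, \autoref{prp:mainThm} requires $\mathcal{Z}$-stability rather than pure infiniteness, and second, it requires separability, whereas here no separability is assumed.

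First I would address $\mathcal{Z}$-stability. The key input is the Kirchberg--Phillips theory: a nuclear, purely infinite \ca{} absorbs the Cuntz algebra $\mathcal{O}_\infty$ tensorially (at least in the separable case), and $\mathcal{O}_\infty$ is strongly self-absorbing with $\mathcal{O}_\infty\otimes\mathcal{Z}\cong\mathcal{O}_\infty$, so such algebras are automatically $\mathcal{Z}$-stable. Thus any separable, nuclear, purely infinite \ca{} $A$ of real rank zero is $\mathcal{Z}$-stable, and \autoref{prp:mainThm} immediately gives $\gr(A)=1$. This handles the separable case cleanly.

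The remaining issue is to remove separability, and this I expect to be the main obstacle — though the excerpt explicitly signals (in the remark following \autoref{prp:mainThm}) that the same technique applies here. The plan is to use a separable-exhaustion argument in the spirit of \cite[Section~4]{Thi12arX:GenRnk}: for a nonseparable nuclear, purely infinite \ca{} $A$ of real rank zero, one shows that $A$ is the inductive limit (indeed the directed union) of separable sub-\ca{s} that are again nuclear, purely infinite, and of real rank zero. For any separable sub-\ca{} $B_0\subseteq A$, one must find a separable sub-\ca{} $B$ with $B_0\subseteq B\subseteq A$ that inherits all three properties; real rank zero and pure infiniteness must be arranged by absorbing suitable witnessing elements (approximate units of projections, comparison/infiniteness witnesses) into $B$ through a countable back-and-forth closure, while nuclearity of the limit is automatic. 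Each such $B$ is separable, nuclear, purely infinite of real rank zero, hence has $\gr(B)=1$ by the separable case. Since the generator rank does not increase under inductive limits (\autoref{prp:permanence}), we conclude $\gr(A)\leq 1$, and $\gr(A)\neq 0$ by \autoref{prp:gr0} since $A$ is noncommutative, giving $\gr(A)=1$.

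The delicate point is verifying that the separable sub-\ca{s} in the exhaustion can be taken to be simultaneously of real rank zero and purely infinite; this is exactly the kind of Löwenheim--Skolem closure argument carried out in \cite[Section~4]{Thi12arX:GenRnk}, where one iterates countably many times, at each stage enlarging $B$ to absorb approximating projections and infiniteness data for a dense sequence, and then closes up. I would cite that machinery rather than reproduce it. If one prefers to avoid the nonseparable subtleties entirely, the cleaner route is simply to state and prove the corollary in the separable case via $\mathcal{O}_\infty$-absorption and then invoke the exhaustion remark; but since the paper commits to the general statement, the back-and-forth argument is the honest core.
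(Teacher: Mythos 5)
Your proposal is correct and follows essentially the same route as the paper: the separable case via $\mathcal{O}_\infty$-absorption (hence $\mathcal{Z}$-stability) and \autoref{prp:mainThm}, then removal of separability by exhibiting $A$ as an inductive limit over a $\sigma$-complete, cofinal family of separable sub-\ca{s} that are simultaneously nuclear, purely infinite, and of real rank zero, concluding with \autoref{prp:permanence} and \autoref{prp:gr0}. The only minor inaccuracy is attribution: for possibly non-simple purely infinite algebras the relevant absorption result is Kirchberg--R{\o}rdam \cite[Theorem~9.1]{KirRor02InfNonSimpleCalgAbsOInfty} rather than Kirchberg--Phillips, which is exactly what the paper invokes.
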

\begin{proof}
Let $A$ be a nuclear, purely infinite \ca{} of real rank zero.
As in \cite[Paragraph~4.1]{Thi12arX:GenRnk}, we let $\SubSep(A)$ denote the collection of separable sub-\ca{s} of $A$; a subset $\mathcal{S}\subseteq\SubSep(A)$ is \emph{$\sigma$-complete} if $\overline{\bigcup\mathcal{T}}\in\mathcal{S}$ for every countable, directed subset $\mathcal{T}\subseteq\mathcal{S}$;
a subset $\mathcal{S}\subseteq\SubSep(A)$ is \emph{cofinal} if for every $B_0\in\SubSep(A)$ there is $B\in\mathcal{S}$ with $B_0\subseteq B$.

We let $\mathcal{S}_{\mathrm{nuc}}$, $\mathcal{S}_{\mathrm{pi}}$ and $\mathcal{S}_{\mathrm{rr0}}$ denote the sets of separable sub-\ca{s} of $A$ that are nuclear, purely infinite and have real rank zero, respectively.
It follows from Paragraph~II.9.6.5 and Proposition~IV.3.1.9 in \cite{Bla06OpAlgs} that $\mathcal{S}_{\mathrm{nuc}}$ is $\sigma$-complete and cofinal.
Using Proposition~4.18 and Corollary~4.22 in \cite{KirRor00PureInf}, it follows that $\mathcal{S}_{\mathrm{pi}}$ is $\sigma$-complete and cofinal.
Lastly, it was noted at the end of \cite[Paragraph~4.1]{Thi12arX:GenRnk} that real rank zero satisfies the `L\"{o}wenheim-Skolem condition', which means that $\mathcal{S}_{\mathrm{rr0}}$ is $\sigma$-complete and cofinal.
Set
\[
\mathcal{S}:= \mathcal{S}_{\mathrm{nuc}}\cap\mathcal{S}_{\mathrm{pi}}\cap\mathcal{S}_{\mathrm{rr0}}.
\]
It is well-known that the intersection of countably many $\sigma$-complete, cofinal subsets is again $\sigma$-complete and cofinal, whence $\mathcal{S}$ is $\sigma$-complete and cofinal.

Let $B\in\mathcal{S}$.
Then $B$ is a separable, nuclear, purely infinite \ca{} of real rank zero.
It follows from \cite[Theorem~9.1]{KirRor02InfNonSimpleCalgAbsOInfty} that $B$ is $\mathcal{O}_\infty$-stable, and thus $\mathcal{Z}$-stable.
By \autoref{prp:mainThm}, we have $\gr(B)=1$.
Since $A$ is the inductive limit of the system $\mathcal{S}$ (indexed over itself), we obtain $\gr(A)\leq 1$ by \autoref{prp:permanence}.
Since purely infinite \ca{s} are by definition noncommutative, we we have $\gr(A)\neq 0$ by \autoref{prp:gr0}, and so $\gr(A)=1$.
\end{proof}

Recall that a \emph{Kirchberg algebra} is a separable, simple, nuclear, purely infinite \ca.
By Zhang's theorem, \cite[Proposition~V.3.2.12, p.454]{Bla06OpAlgs}, every simple, purely infinite \ca{} has real rank zero.
Thus, Kirchberg algebras have real rank zero.
Applying \autoref{prp:nuclPI}, we obtain:

\begin{cor}
\label{prp:KirchbergAlg}
Every Kirchberg algebra has generator rank one.
\end{cor}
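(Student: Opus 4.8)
The plan is to deduce this statement directly from \autoref{prp:nuclPI}, whose hypotheses are nuclearity, pure infiniteness, and real rank zero. A Kirchberg algebra is by definition separable, simple, nuclear, and purely infinite, so the only ingredient not immediately present is real rank zero. I would supply it by invoking Zhang's theorem in the form already recorded above, namely \cite[Proposition~V.3.2.12, p.454]{Bla06OpAlgs}: every simple, purely infinite \ca{} has real rank zero. Hence any Kirchberg algebra is automatically of real rank zero.

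With real rank zero in hand, a Kirchberg algebra is in particular a nuclear, purely infinite \ca{} of real rank zero, so \autoref{prp:nuclPI} applies verbatim and yields generator rank one. It is worth noting that separability and simplicity, although part of the definition of a Kirchberg algebra, are not even required for this last step, since \autoref{prp:nuclPI} already covers the nonsimple and nonseparable case; the only genuine use of simplicity is indirect, through Zhang's theorem, to obtain real rank zero.

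I expect no substantive obstacle here: all the difficulty has been absorbed into the chain \autoref{prp:mainThm} $\Rightarrow$ \autoref{prp:nuclPI}, and the present assertion is essentially a packaging corollary whose only content is the observation that simple, purely infinite algebras have real rank zero. The single point that warrants a glance is that Zhang's theorem is being quoted in the generality appropriate to arbitrary (possibly nonunital) Kirchberg algebras, which is precisely the form in which \cite[Proposition~V.3.2.12]{Bla06OpAlgs} is stated.
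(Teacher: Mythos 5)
Your proposal is correct and matches the paper's own argument exactly: the paper also deduces real rank zero from Zhang's theorem (cited as \cite[Proposition~V.3.2.12, p.454]{Bla06OpAlgs}) and then applies \autoref{prp:nuclPI}. Your added observation that simplicity enters only through Zhang's theorem is accurate and consistent with the paper's presentation.
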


Let us say that a simple, nuclear \ca{} is \emph{classifiable} if it is unital, separable, $\mathcal{Z}$-stable and satisfies the Universal Coefficient Theorem (UCT).
By the recent breakthrough in the Elliott classification program, \cite{GonLinNiu15arX:mainClassification, EllGonLinNiu15arX:classFinDR2, TikWhiWin17QDNuclear}, two simple, nuclear, classifiable \ca{s} are isomorphic if and only if their Elliott invariants ($K$-theoretic and tracial data) are isomorphic.

\begin{cor}
\label{prp:classifiable-gr1}
Let $A$ be a unital, separable, simple, nuclear, $\mathcal{Z}$-stable \ca{} satisfying the UCT.
Then $A$ has generator rank one.
In particular, a generic element in $A$ is a generator.
\end{cor}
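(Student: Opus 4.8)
The plan is to split into the two cases that can occur for a simple $\mathcal{Z}$-stable \ca{} and to reduce each to a result that is already available, using throughout that the generator rank is an isomorphism invariant. Since $A$ is simple and $\mathcal{Z}$-stable, it is either purely infinite or stably finite; this dichotomy for simple $\mathcal{Z}$-absorbing \ca{s} is well known (it is a theorem of R{\o}rdam, following from strict comparison). In each case I would exhibit $A$, up to isomorphism, as an algebra whose generator rank has already been shown to be one.

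In the purely infinite case, $A$ is a unital, separable, simple, nuclear, purely infinite \ca, that is, a Kirchberg algebra, so \autoref{prp:KirchbergAlg} applies directly and yields $\gr(A)=1$. Note that this route already incorporates \autoref{prp:mainThm} through Zhang's theorem, since a simple, purely infinite \ca{} automatically has real rank zero.

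In the stably finite case the assumption of real rank zero need not hold, so \autoref{prp:mainThm} does not apply, and here I would invoke the Elliott classification. Using the cited classification results \cite{GonLinNiu15arX:mainClassification, EllGonLinNiu15arX:classFinDR2, TikWhiWin17QDNuclear}, together with the existence (range of the invariant) part of the program, which realizes every admissible stably finite Elliott invariant by an ASH model, the algebra $A$ is isomorphic to a separable, ASH algebra $B$; moreover $B$ is $\mathcal{Z}$-stable, since $\mathcal{Z}$-stability is preserved under isomorphism and $B\cong A$. By \cite[Theorem~5.10]{Thi20arX:grSubhom}, such $B$ has generator rank one, and hence so does $A$. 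I expect the main obstacle to be precisely this identification step: one must appeal to the range-of-the-invariant part of classification, not merely the uniqueness statement quoted in the introduction, in order to guarantee that the model can be taken ASH, and one should record that its $\mathcal{Z}$-stability is automatic.

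Finally, for the \emph{in particular} clause I would argue as in \autoref{rmk:gr1}: from $\gr(A)=1$ and the identity $\gr(A)=\max\{\rr(A),\grPre(A)\}$ we obtain $\grPre(A)\leq 1$, so the set of (non-self-adjoint) generators of $A$ is a dense $G_\delta$-subset, which is exactly the assertion that a generic element of $A$ is a generator.
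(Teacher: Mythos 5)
Your proposal is correct and follows essentially the same route as the paper: R{\o}rdam's dichotomy, the Kirchberg case via \autoref{prp:KirchbergAlg}, the stably finite case by identifying $A$ with a $\mathcal{Z}$-stable ASH algebra and applying \cite[Theorem~5.10]{Thi20arX:grSubhom}, and the generic-generator conclusion via \autoref{rmk:gr1}. The only cosmetic difference is that where you spell out the uniqueness-plus-range-of-invariant argument, the paper simply cites \cite[Theorem~6.2(iii)]{TikWhiWin17QDNuclear}, which packages exactly that step into the statement that $A$ is ASH.
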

\begin{proof}
By \cite[Theorem~4.1.10]{Ror02Classification}, $A$ is either stably finite or purely infinite.
In the second case, $A$ is a Kirchberg algebra and we obtain $\gr(A)=1$ by \autoref{prp:KirchbergAlg}.
(The purely infinite case does not require the UCT.)

In the first case, it follows from \cite[Theorem~6.2(iii)]{TikWhiWin17QDNuclear} that $A$ is an approximately subhomogeneous (ASH) algebra.
By \cite[Theorem~5.10]{Thi20arX:grSubhom}, every $\mathcal{Z}$-stable ASH-algebra has generator rank one.
Thus, we have $\gr(A)=1$ in either case.

Since $A$ is unital and separable, $\gr(A)=1$ means that generators in $A$ are a dense $G_\delta$-subset;
see \autoref{rmk:gr1}.
\end{proof}

\begin{rmks}
(1)
It seems likely that the proof of the main theorem can be generalized to show the following:
If $A$ is a unital, separable, $\mathcal{Z}$-stable \ca{} such that $A\otimes M_{2^\infty}$ and $A\otimes M_{3^\infty}$ have real rank zero, then $A$ has generator rank one.

(2)
Let $A$ be a unital, separable, $\mathcal{Z}$-stable \ca{}.
By \cite[Theorem~3.8]{ThiWin14GenZStableCa}, $A$ is singly generated.
Our results show that under additional assumptions, $A$ even contains a dense set of generators.
It is reasonable to expect that every $\mathcal{Z}$-stable \ca{} has generator rank one.
However, by \cite[Proposition~3.10]{Thi12arX:GenRnk}, the real rank is a lower bound for the generator rank, and it is not known that every $\mathcal{Z}$-stable \ca{} has real rank at most one.

Note that every unital, separable, \emph{simple}, $\mathcal{Z}$-stable \ca{} has real rank at most one:
It is either purely infinite and then has real rank zero;
or it is stably finite and thus has stable rank one by \cite[Theorem~6.7]{Ror04StableRealRankZ}, which entails real rank at most one.
Therefore, the following question has no obvious obstruction:
\end{rmks}

\begin{qst}
Does every unital, separable, \emph{simple}, $\mathcal{Z}$-stable \ca{} have generator rank one?
\end{qst}


\begin{thebibliography}{EGLN15}

\bibitem[Bla06]{Bla06OpAlgs}
\bgroup\scshape{}B.~Blackadar\egroup{}, \emph{Operator algebras},
  \emph{Encyclopaedia of Mathematical Sciences} \textbf{122}, Springer-Verlag,
  Berlin, 2006, Theory of \ca{s} and von Neumann algebras, Operator Algebras
  and Non-commutative Geometry, III.

\bibitem[BP91]{BroPed91CAlgRR0}
\bgroup\scshape{}L.~G. Brown\egroup{} and \bgroup\scshape{}G.~K.
  Pedersen\egroup{}, \ca{s} of real rank zero,  \emph{J. Funct. Anal.}
  \textbf{99} (1991), 131--149.

\bibitem[Dad09]{Dad09CtsFieldsOverFD}
\bgroup\scshape{}M.~Dadarlat\egroup{}, Continuous fields of \ca{s} over finite
  dimensional spaces,  \emph{Adv. Math.} \textbf{222} (2009), 1850--1881.

\bibitem[EGLN15]{EllGonLinNiu15arX:classFinDR2}
\bgroup\scshape{}G.~A. Elliott\egroup{}, \bgroup\scshape{}G.~Gong\egroup{},
  \bgroup\scshape{}H.~Lin\egroup{}, and \bgroup\scshape{}Z.~Niu\egroup{}, On
  the classification of simple amenable \ca{s} with finite decomposition rank,
  {II}, preprint (arXiv:1507.03437 [math.OA]), 2015.

\bibitem[GLN15]{GonLinNiu15arX:mainClassification}
\bgroup\scshape{}G.~Gong\egroup{}, \bgroup\scshape{}H.~Lin\egroup{}, and
  \bgroup\scshape{}Z.~Niu\egroup{}, Classification of finite simple amenable
  {$\mathcal{Z}$}-stable \ca{s}, preprint (arXiv:1501.00135 [math.OA]), 2015.

\bibitem[KR00]{KirRor00PureInf}
\bgroup\scshape{}E.~Kirchberg\egroup{} and
  \bgroup\scshape{}M.~R{\o}rdam\egroup{}, Non-simple purely infinite \ca{s},
  \emph{Amer. J. Math.} \textbf{122} (2000), 637--666.

\bibitem[KR02]{KirRor02InfNonSimpleCalgAbsOInfty}
\bgroup\scshape{}E.~Kirchberg\egroup{} and
  \bgroup\scshape{}M.~R{\o}rdam\egroup{}, Infinite non-simple \ca{s}: absorbing
  the {C}untz algebra {$\mathcal{O}_\infty$},  \emph{Adv. Math.} \textbf{167}
  (2002), 195--264.

\bibitem[OZ76]{OlsZam76CaSingleGen}
\bgroup\scshape{}C.~L. Olsen\egroup{} and \bgroup\scshape{}W.~R. Zame\egroup{},
  Some {$C\sp{\ast} $}-alegebras with a single generator,  \emph{Trans. Amer.
  Math. Soc.} \textbf{215} (1976), 205--217.

\bibitem[R{\o}r02]{Ror02Classification}
\bgroup\scshape{}M.~R{\o}rdam\egroup{}, Classification of nuclear, simple
  \ca{s},  in \emph{Classification of nuclear \ca{s}. {E}ntropy in operator
  algebras}, \emph{Encyclopaedia Math. Sci.} \textbf{126}, Springer, Berlin,
  2002, pp.~1--145.

\bibitem[R{\o}r04]{Ror04StableRealRankZ}
\bgroup\scshape{}M.~R{\o}rdam\egroup{}, The stable and the real rank of
  {$\mathcal{Z}$}-absorbing \ca{s},  \emph{Internat. J. Math.} \textbf{15}
  (2004), 1065--1084.

\bibitem[RW10]{RorWin10ZRevisited}
\bgroup\scshape{}M.~R{\o}rdam\egroup{} and \bgroup\scshape{}W.~Winter\egroup{},
  The {J}iang-{S}u algebra revisited,  \emph{J. Reine Angew. Math.}
  \textbf{642} (2010), 129--155.

\bibitem[Thi12]{Thi12arX:GenRnk}
\bgroup\scshape{}H.~Thiel\egroup{}, The generator rank of \ca{s}, J. Funct.
  Anal. (to appear), preprint (arXiv:1210.6608 [math.OA]), 2012.

\bibitem[Thi20]{Thi20arX:grSubhom}
\bgroup\scshape{}H.~Thiel\egroup{}, The generator rank of subhomogeneous
  \ca{s}, preprint (arXiv:2006.03624 [math.OA]), 2020.

\bibitem[TW14]{ThiWin14GenZStableCa}
\bgroup\scshape{}H.~Thiel\egroup{} and \bgroup\scshape{}W.~Winter\egroup{}, The
  generator problem for {$\mathcal{Z}$}-stable \ca{s},  \emph{Trans. Amer.
  Math. Soc.} \textbf{366} (2014), 2327--2343.

\bibitem[TWW17]{TikWhiWin17QDNuclear}
\bgroup\scshape{}A.~Tikuisis\egroup{}, \bgroup\scshape{}S.~White\egroup{}, and
  \bgroup\scshape{}W.~Winter\egroup{}, Quasidiagonality of nuclear \ca{s},
  \emph{Ann. of Math. (2)} \textbf{185} (2017), 229--284.

\bibitem[TW07]{TomWin07ssa}
\bgroup\scshape{}A.~S. Toms\egroup{} and \bgroup\scshape{}W.~Winter\egroup{},
  Strongly self-absorbing \ca{s},  \emph{Trans. Amer. Math. Soc.} \textbf{359}
  (2007), 3999--4029.

\end{thebibliography}

\providecommand{\href}[2]{#2}

\end{document}